\newtheorem{theorem}{Theorem}[section]
\newtheorem{corollary}[theorem]{Corollary}
\newtheorem{lemma}[theorem]{Lemma}
\newtheorem{proposition}[theorem]{Proposition}
\newtheorem{remark}[theorem]{Remark}
\begin{document}

\title[Regular $S$-acts with primitive normal and antiadditive theories]
{Regular $S$-acts with primitive normal and antiadditive theories}
\subjclass{20 M 10}

\noindent 510.67:512.56 \keywords{\em Primitive normal theory,
antiadditive theory, $S$-act, regular $S$-act}
\date{\today}
\author{Stepanova A.A. ${}^1$, Baturin G.I.} \footnotetext[1]
{This research was supported by RFBR (grant 09-01-00336-a)}
\address{Institute of Mathematics and Computer Science\\Far East State
University\\Vladivostok\\Russia} \email{stepltd@mail.ru}
\address{Institute of Mathematics and Computer Science\\Far East State
University\\Vladivostok\\Russia} \email{gbaturin@list.ru}

\begin{abstract} In this work we
investigate the commutative monoids over which the axiomatizable
class of regular $S$-acts is primitive normal and antiadditive. We
prove that the primitive normality of an axiomatizable class of
regular $S$-acts over the commutative monoid $S$ is equivalent to
the antiadditivity of this class and it is equivalent to a
linearly order of semigroup $R$ such that an $S$-act $_SR$ is a
maximum under the inclusion regular subact of $S$-act $_SS$.

\end{abstract} \maketitle

\sloppy  \vskip 1cm

\section{Introduction}
In \cite{St} the primitive normal, primitive connected and
additive theories of $S$-acts are studied. In particular it is
proved that a class of all $S$-acts is primitive normal if and
only if $S$ is a linearly ordered monoid. In  \cite {St1} on a
language of a structure of primitive equivalences there are
described $S$-acts with primitive normal, additive and
antiadditive theories. It is shown that the class of all $S$-acts
is antiadditive only for a linearly ordered monoid $S$, that is
the class of all $S$-acts is antiadditive if and only if  this
class is primitive normal. In this work we investigate the
commutative monoids over which the axiomatizable class of regular
$S$-acts is primitive normal and antiadditive.  We prove that the
primitive normality of an axiomatizable class of regular $S$-acts
over the commutative monoid $S$ is equivalent to the
antiadditivity of this class and it is equivalent to a linearly
order of semigroup $R$ such that an $S$-act $_SR$ is a maximum
under the inclusion regular subact of $S$-act $_SS$.

Let $T$ be a complete first order theory of a language $L$. We fix
some large much saturated model $\mathcal {C}$ of $T$ and we
suppose that all considered models of the theory are its
elementary submodels. All elements, tuples of elements and sets
will be taken from $\mathcal {C}$. The tuples of elements $\langle
a_0,\ldots, a_{n-1}\rangle$ and variables $\langle
x_0,\ldots,x_{n-1}\rangle$ will be denoted by $\bar a$ and $\bar
x$ accordingly. Let $ \bar s =\langle s_0, \ldots, s _ {n-1}
\rangle $ and $\bar t=\langle t_0,\ldots,t_{k-1}\rangle$ be the
tuples of variables or elements, $A$ be a set. We will often write
$\bar s\in A$ instead $s_0,\ldots,s_{n-1}\in A$, $\bar s(i)$
instead $s_i$, $\exists\bar s$ instead $\exists s_0\ldots\exists
s_{n-1}$. The set $\{s_0,\ldots,s_{n-1},t_0,\ldots,t_{k-1}\}$ we
will denote by $\bar s\cup\bar t$. We will denote the length of a
tuple $\bar s$ by $|\bar s|$, i.e. $|\bar s|=n$. If $\Phi(\bar
x,\bar y)$ is a formula of language $L$, $\mathcal{A}$ is a model
of the theory $T$, $\bar a$ is a tuple of elements from
$\mathcal{A}$ and $|\bar a|=|\bar y|$, then $\Phi(\mathcal{A},\bar
a)$ will denote the set $\{\bar b\mid \mathcal{A}\models\Phi(\bar
b,\bar a)\}$.

The formula of a form
$$\exists \bar x (\Phi_0\wedge\cdots\wedge\Phi_k),$$
where $\Phi_i$ are the atomic formulas $(0\leqslant i\leqslant
k)$, is called \em a primitive formula.\em

 Let $\Phi(\bar x,\bar y)$ be a primitive formula of language $L$,
$\bar a$, $\bar b$ be the tuples of elements and $|\bar a|=|\bar
b|=|\bar y|$. The set $\Phi(\mathcal{C},\bar a)$ is called \em a
primitive set\em. The sets $\Phi(\mathcal{C},\bar a)$ and
$\Phi(\mathcal{C},\bar b)$ are called \em the primitive copies.\em

A theory $T$ is called \em primitive normal \em if for each
primitive copies $X,Y$ we have $X=Y $ or $X\cap Y=\varnothing$. An
axiomatizable class of structures $K$ of language $L$ is called
\em primitive normal \em if the theory of this class is primitive
normal. It is known (see \cite {P}) that the Cartesian closed
stable class of structures is primitive normal.

An equivalence $\alpha$ on some set $X$ of $n$-tuples of elements
from $\mathcal{C}$, which is defined in $\mathcal{C}$ by some
primitive formula $\Phi(\bar x_1,\bar x_2)$, is called \em a
primitive equivalence\em. The domain $X$ of such equivalence
$\alpha$ is defined in $\mathcal{C}$ by primitive formula
$\Phi(\bar x,\bar x)$ and is denoted by $dom(\alpha)$. If $\bar
a\in X$ then $\alpha$-class which contains $\bar a$ will be
denoted by $\bar a/\alpha$.

A set $X$ is called \em $\Delta$-primitive \em if there exists a
family $S$ of primitive sets such that
 $$X=\bigcap\{Y\mid Y\in S\}.$$
A set of form $X=X^*/\alpha=\{\bar a/\alpha\mid\bar a\in X^*\}$,
where $X^*$ is $\Delta$-primitive set, $\alpha$ is primitive
equivalence and $X^*\subseteq dom(\alpha)$, is called \em a
generalized primitive set\em. A set $X^*$ is called \em a basis
\em and $\alpha$  is called \em a generative equivalence \em of
generalized primitive set $X$.

The theory $T$ is called \em antiadditive \em if it is primitive
normal and there is no infinite generalized primitive set which is
an Abelian group under the defined by primitive formula operation.
An axiomatizable class of structures $K$ of language $L$ is called
\em antiadditive \em if the theory of this class is antiadditive.

Let us remind some concepts from the theory of $S$-acts.
Throughout this paper $S$ will denote a monoid with identity $1$
and set of idempotents $E$. A structure $\langle A;s\rangle_{s\in
S}$ of the language $L_S=\{s\mid s\in S\}$ is \em a (left) $S$-act
\em if $s_1(s_2a)=(s_1s_2)a$ and $1a=a$ for all $s_1,s_2\in S$ and
$a\in A$. An $S$-act $\langle A;s\rangle_{s\in S}$ we will denote
by ${}_SA$. All $S$-acts, treated in the article, are left
$S$-acts.

Let $_SA$, $_SB$ be $S$-acts. We call $a\in A$ \em an act-regular
element \em if there exists a homomorphism
$\varphi:{}_SSa\longrightarrow {}_SS$ such that $\varphi(a)a=a$.
An $S$-act $_SA$ is called \em regular \em if all its elements are
act--regular. If for elements $a\in A$ and $b\in B$ there is an
isomorphism $f: {}_SSa\to {}_SSb$ such that $f(a)=b$ then we will
write ${}_SSa \mathrel{\widetilde{\to}} {}_SSb$.

Note that the union of all regular subacts of an $S$-act is also a
regular subact. The union of all regular subacts of an $S$-act
${}_SS$ we will denote by ${}_SR$. Hereinafter, we assume that
$R\neq \emptyset$.

A semigroup $T$ is called  \emph{linearly ordered} if for all
$a,b\in T$ either $Ta\subseteq Tb$ or $Tb\subseteq Ta$. A monoid
$S$ is called \emph{regularly linearly ordered} if  for all $a\in
R$ a semigroup $Sa$ is linearly ordered.

We will distinguish the symbols of set-theoretic inclusion
$\subset$ and $\subseteq.$

\section{Primitive normal Classes of Regular Acts}

We will use the following remark without references to it.

\begin {remark} \label {Se subsut Sf} For all $a\in S,\;e,f\in E$
we have

1) $aS\subseteq eS\Longleftrightarrow ea=a$;

2) $Sa\subseteq Se\Longleftrightarrow ae=a$.
\end{remark}

\begin{theorem}\label{prim norm iff}
An $S$-act $_SA$ is primitive normal if and only if for any
pairwise disjoint finite sets of indexes $I, J, K$, any
$s_i,l_j,r^1_k,r^2_k\in S$ ($i\in I, j\in J, k\in K$), $\bar a_1,
\bar a_2, \bar a_3\in A$, $|\bar a_1|=|\bar a_2|=|\bar a_3|=n$, if
\begin{equation}\tag{1}
_SA\models\bigwedge\limits_{i\in I}s_i\bar a_1(l_i)=s_i\bar
a_2(l_i) \wedge\bigwedge\limits_{j\in J}t_j\bar a_2(l_j)=t_j\bar
a_3(l_j) \wedge\bigwedge\limits_{k\in K}\bigwedge\limits_{m\in
\{1,2,3\}}r^1_k\bar a_m(l_k)=r^2_k\bar a_m(l_k),
\end{equation}
where $0\leqslant l_i,l_j,l_k\leqslant n-1$, then there exists
$\bar b\in A$ such that $|\bar b|=n$ and
\begin{equation}\tag{2}_SA\models\bigwedge\limits_{i\in I}s_i\bar a_3(l_i)=s_i\bar b(l_i)
\wedge\bigwedge\limits_{j\in J}t_j\bar b(l_j)=t_j\bar a_1(l_j)
\wedge\bigwedge\limits_{k\in K}r^1_k\bar b(l_k)=r^2_k\bar b(l_k).
\end{equation}
\end{theorem}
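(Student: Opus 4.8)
The plan is to unwind the definition of primitive normality directly in terms of primitive copies coming from the natural "partitioned" primitive formulas available for $S$-acts, and then recognize condition (1) as the non-emptiness of the intersection of two such copies and condition (2) as a witness for their equality. Recall that in the language $L_S$ the only atomic formulas are of the form $sx = ty$ (or $sx = tx$), so a primitive formula $\Phi(\bar x,\bar y)$ is, after pulling the existential quantifiers and renaming, equivalent to a conjunction of equalities $u\,\bar z(p) = v\,\bar z(q)$ where $\bar z$ runs over the free and bound variables. The key combinatorial observation is that two primitive copies $\Phi(\mathcal C,\bar a)$ and $\Phi(\mathcal C,\bar c)$ can be analyzed by looking at the "graph" on the coordinates: indices where the defining equations only constrain the interaction with $\bar a$, only with $\bar c$, or with both simultaneously. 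Grouping these gives exactly the three pairwise disjoint index sets $I$ (equations tying $\bar x$ to the first parameter), $J$ (equations tying $\bar x$ to the second parameter), and $K$ (equations internal to $\bar x$, i.e. holding for all parameters) in the statement, with the data $s_i, t_j, r^1_k, r^2_k$ recording the semigroup coefficients.

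First I would prove the forward direction: assume $_SA$ is primitive normal, and suppose we are given the index sets, coefficients and tuples $\bar a_1,\bar a_2,\bar a_3$ satisfying (1). I would build a primitive formula $\Phi(\bar x,\bar y)$ (in variables $\bar x$ of length $n$ and $\bar y$ of length $3n$, say) whose equations are precisely the ones appearing in (1) but with one of the three tuples replaced by the free variable $\bar x$; the point is to arrange that $\bar a_2$ (the "middle" tuple, which appears in both the $I$-block and the $J$-block of (1)) witnesses $\bar a_2 \in \Phi(\mathcal C,\bar a_1\bar a_3\ldots) \cap \Phi(\mathcal C,\ldots)$ — that is, (1) says two suitable primitive copies meet. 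By primitive normality they are then equal, so the element witnessing membership in one copy (essentially $\bar a_1$ resp. $\bar a_3$ placed appropriately) witnesses membership in the other, and reading off the equations of that membership yields exactly a tuple $\bar b$ satisfying (2). The converse direction is the more delicate one: given the combinatorial condition, I must show that whenever two primitive copies $\Phi(\mathcal C,\bar p)$ and $\Phi(\mathcal C,\bar q)$ have a common point $\bar d$, they coincide. I would take an arbitrary $\bar d' \in \Phi(\mathcal C,\bar p)$ and, after eliminating the existential quantifiers of $\Phi$ (introducing the witnessing elements as extra coordinates, which only enlarges $n$), extract from the equations of $\Phi$ the coefficients $s_i,t_j,r^1_k,r^2_k$ and set $\bar a_1 = \bar d'$, $\bar a_2 = \bar d$, $\bar a_3 = $ (the part of $\bar q$ relevant to $\Phi(\cdot,\bar q)$ together with its witnesses); verifying that (1) holds is then just bookkeeping, and the $\bar b$ supplied by the hypothesis is exactly a point showing $\bar d' \in \Phi(\mathcal C,\bar q)$, giving one inclusion, and symmetry gives the other.

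The main obstacle I expect is the reduction from an arbitrary primitive formula to the canonical "three-block" shape: handling the existential quantifiers (which must be absorbed into the tuple length $n$ in a way that is uniform across both copies), dealing with atomic formulas of the mixed form $s\bar x(p) = t\bar y(q)$ versus pure forms, and checking that the partition of coordinates/equations into the $I$, $J$, $K$ blocks is genuinely exhaustive and the blocks genuinely disjoint — in particular that an equation cannot simultaneously constrain the interaction with two distinct parameter tuples in an irreducible way. Once the syntactic normalization is set up correctly, both implications become a matter of transcribing (1) into "the copies intersect" and (2) into "the common point forces equality", which is essentially formal. I would therefore spend most of the write-up on a careful normal-form lemma for primitive sets over $S$-acts and then derive both directions of the theorem from it in a few lines each.
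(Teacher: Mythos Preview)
Your necessity argument is essentially the paper's: from the data $I,J,K$ one builds the single primitive formula
\[
\Phi(\bar x,\bar y)\;\leftrightharpoons\;\exists\bar u\Bigl(\bigwedge_{I}s_i\bar x(l_i)=s_i\bar u(l_i)\wedge\bigwedge_{J}t_j\bar u(l_j)=t_j\bar y(l_j)\wedge\bigwedge_{K}\text{($K$-constraints on }\bar x,\bar u,\bar y)\Bigr),
\]
observes that (1) says $\bar a_1\in\Phi(A,\bar a_1)\cap\Phi(A,\bar a_3)$ (with $\bar u=\bar a_1$ and $\bar u=\bar a_2$ respectively) and $\bar a_3\in\Phi(A,\bar a_3)$, and then primitive normality forces $\bar a_3\in\Phi(A,\bar a_1)$, whose witness $\bar b$ is exactly (2). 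Your parameter tuple of length $3n$ is heavier than needed but harmless.

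The sufficiency sketch, however, has a genuine gap. The hypothesis only produces equalities of the very restricted shape $s_i\bar a_1(l_i)=s_i\bar a_2(l_i)$ and $t_j\bar a_2(l_j)=t_j\bar a_3(l_j)$: \emph{same coefficient, same coordinate} on both sides. An arbitrary atomic subformula $s\,\bar x(p)=t\,\bar y(q)$ of $\Psi(\bar x,\bar y)=\exists\bar u\,\Theta(\bar x,\bar y,\bar u)$ does not have this form, and your assignment $\bar a_1=\bar d'$, $\bar a_2=\bar d$, $\bar a_3=(\text{piece of }\bar q)$ will not manufacture it. The paper's move is different and is the missing idea: one partitions the atomic subformulas of $\Theta$ into \emph{six} groups $I_1,\dots,I_6$ according to which two of the blocks $\bar x,\bar u,\bar y$ they join. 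Given $\bar a_2\in\Psi(A,\bar a_1)\cap\Psi(A,\bar a_4)$, $\bar a_3\in\Psi(A,\bar a_4)$ with respective witnesses $\bar b_{21},\bar b_{24},\bar b_{34}$, the groups $I_1,I_5,I_6$ (those not touching $\bar u$) are verified for $(\bar a_3,\bar a_1)$ directly, while the combinatorial hypothesis is applied not to the $\bar a$'s but to the triple of \emph{witnesses} $\bar b_{21},\bar b_{24},\bar b_{34}$. The point is that the $I_2$-equations yield $t^2_j\bar b_{21}(l_j)=t^2_j\bar b_{24}(l_j)$ because both sides equal $s^2_i\bar a_2(l_i)$, and the $I_4$-equations yield $s^4_i\bar b_{24}(l_i)=s^4_i\bar b_{34}(l_i)$ because both equal $t^4_j\bar a_4(l_j)$; it is this triangulation through the shared $\bar a_2$ and $\bar a_4$ that creates the same-coefficient equalities the hypothesis requires, with $I_3$ supplying the $K$-block. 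The resulting $\bar b$ is then the witness for $\bar a_3\in\Psi(A,\bar a_1)$. Your plan to absorb witnesses into longer tuples and hope the partition into three blocks is exhaustive does not reach this; as you yourself flag, the reduction is the whole difficulty, and the six-fold (not three-fold) classification together with applying the hypothesis at the level of witnesses is what makes it go through.
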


\begin{proof} \emph{Necessity}. Let $_SA$ be a primitive normal $S$-act and
(1) hold for some pairwise disjoint finite sets of indexes $I, J,
K$, some $s_i,l_j,r^1_k,r^2_k\in S$ ($i\in I, j\in J, k\in K$) and
$\bar a_1, \bar a_2, \bar a_3\in A$, $|\bar a_1|=|\bar a_2|=|\bar
a_3|=n$. We put
$$\Phi(\bar x,\bar y)\leftrightharpoons\exists\bar u(\bigwedge\limits_{i\in I}s_i\bar x(l_i)=s_i\bar u(l_i)
\wedge\bigwedge\limits_{j\in J}t_j\bar u(l_j)=t_j\bar y(l_j)
\wedge $$
$$\wedge \bigwedge\limits_{k\in
K}r^1_k\bar x(l_k)=r^2_k\bar x(l_k)\wedge\bigwedge\limits_{k\in
K}r^1_k\bar u(l_k)=r^2_k\bar u(l_k)\wedge\bigwedge\limits_{k\in
K}r^1_k\bar y(l_k)=r^2_k\bar y(l_k)),$$
 where $|\bar x|=|\bar
u|=|\bar y|=n$. By a condition $\bar a_1\in\Phi(A,\bar a_1)$ and
$\bar a_1,\bar a_3\in\Phi(A,\bar a_3)$. Since the $S$-act $_SA$ is
primitive normal then $\bar a_3\in\Phi(A,\bar a_1)$ that is the
condition (2) holds.

\emph{Sufficiency}. Let $\Psi(\bar x,\bar y)$ be a primitive
formula,
$$\Psi(\bar x,\bar y)\leftrightharpoons\exists\bar u\Theta(\bar x,\bar y,\bar u),$$
where $$\Theta(\bar x,\bar y,\bar u)\leftrightharpoons\bigwedge
\limits_{\langle i,j\rangle\in I_1}s^1_i\bar x(l_i)=t^1_j\bar
x(l_j) \wedge\bigwedge\limits_{\langle i,j\rangle\in I_2}s^2_i\bar
x(l_i)=t^2_j\bar u(l_j) \wedge \bigwedge\limits_{\langle
i,j\rangle\in I_3}s^3_i\bar u(l_i)=t^3_j\bar u(l_j)\wedge$$
 $$\wedge\bigwedge\limits_{\langle i,j\rangle\in I_4}s^4_i\bar
u(l_i)=t^4_j\bar y(l_j)\wedge\bigwedge\limits_{\langle
i,j\rangle\in I_5}s^5_i\bar y(l_i)=t^5_j\bar
y(l_j)\wedge\bigwedge\limits_{\langle i,j\rangle\in I_6}s^6_i\bar
y(l_i)=t^6_j\bar x(l_j),$$
 $s_i^k,t_j^k\in S$ ($\langle i,j\rangle\in I_k$, $1\leqslant k\leqslant
 6$).
 We will show that $_SA$ is a primitive normal $S$-act. Suppose that $\bar
a_2\in\Psi(A,\bar a_1)$, $\bar a_2,\bar a_3\in\Psi(A,\bar a_4)$
and $\bar a_1,\bar a_2,\bar a_3,\bar a_4\in A$. Then
\begin{equation}\tag{3}_SA\models\Theta(\bar a_2,\bar a_1,\bar b_{21})\wedge\Theta(\bar a_2,\bar a_4,\bar b_{24})
\wedge\Theta(\bar a_3,\bar a_4,\bar b_{34})
\end{equation}
for some $\bar b_{21},\bar b_{24},\bar b_{34}\in A$. It is enough
to show that $\bar a_3\in\Psi(A,\bar a_1)$. From (3) we have
\begin{equation}\tag{4}
_SA\models\bigwedge\limits_{\langle i,j\rangle\in I_1}s^1_i\bar
a_3(l_i)=t^1_j\bar a_{3}(l_j)\wedge\bigwedge\limits_{\langle
i,j\rangle\in I_5}s^5_i\bar a_1(l_i)=t^5_j\bar a_1(l_j);
\end{equation}
\begin{equation}\tag{5}
_SA\models\bigwedge\limits_{\langle i,j\rangle\in I_2}t^2_j\bar
b_{21}(l_j)=s^2_i\bar a_2(l_i)=t^2_j\bar b_{24}(l_j);
\end{equation}
\begin{equation}\tag{6}
_SA\models\bigwedge\limits_{\langle i,j\rangle\in I_4}s^4_i\bar
b_{24}(l_i)=t^4_j\bar a_4(l_j)=s^4_i\bar b_{34}(l_i);
\end{equation}
\begin{equation}\tag{7}
_SA\models\bigwedge\limits_{\langle i,j\rangle\in I_3}s^3_i\bar
b_{km}(l_i)=t^3_j\bar b_{km}(l_j);
\end{equation}
\begin{equation}\tag{8}
_SA\models\bigwedge\limits_{\langle i,j\rangle\in I_6}s^6_i\bar
a_{1}(l_i)=t^6_j\bar a_{2}(l_j)=s^6_i\bar a_{4}(l_i)=t^6_j\bar
a_{3}(l_j)
\end{equation}
for all $\langle k,m\rangle\in \{\langle 2,1\rangle,\langle
2,4\rangle,\langle 3,4\rangle\}$. The conditions of Theorem and
(5), (6), (7) imply
 $$
_SA\models\bigwedge\limits_{\langle i,j\rangle\in I_2}t^2_j\bar
b_{34}(l_j)=t^2_j\bar b(l_j)\wedge\bigwedge\limits_{\langle
i,j\rangle\in I_4}s^4_i\bar b(l_i)=s^4_i\bar
b_{21}(l_i)\wedge\bigwedge\limits_{\langle i,j\rangle\in
I_3}s^3_i\bar b(l_i)=t^3_j\bar b(l_j)
 $$
for some $\bar b\in A$. Using this and (3) we have
 $$_SA\models\bigwedge\limits_{\langle i,j\rangle\in I_2}s^2_i\bar
a_{3}(l_i)=t^2_j\bar b(l_j)\wedge\bigwedge\limits_{\langle
i,j\rangle\in I_4}s^4_i\bar b(l_i)=t^4_j\bar
a_{1}(l_j)\wedge\bigwedge\limits_{\langle i,j\rangle\in
I_3}s^3_i\bar b(l_i)=t^3_i\bar b(l_i).
 $$
  This one, (4) and (8) imply $_SA\models\Theta(\bar a_3,\bar a_1,\bar b)$, that is $\bar
a_3\in\Psi(A,\bar a_1)$. So we have proved that $_SA$ is a
primitive normal $S$-act.
\end{proof}

\begin{proposition}\label{act-reg element} \cite{KKM} Let  $_SA$ be an $S$-act, $a\in A$.
An element $a$ is act-regular if and only if there exists an
idempotent $e\in R$ such that ${}_SSa \mathrel{\widetilde{\to}}
{}_SSe$.
\end{proposition}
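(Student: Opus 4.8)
The plan is to establish the two implications directly; the key move in the ``only if'' direction is that the idempotent one needs is nothing but $e:=\varphi(a)$, where $\varphi\colon{}_SSa\to{}_SS$ is a homomorphism witnessing that $a$ is act-regular.

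For the ``if'' direction, suppose an idempotent $e\in R$ and an isomorphism $f\colon{}_SSa\to{}_SSe$ with $f(a)=e$ are given. First I would check that $ea=a$: since $f$ is an $S$-homomorphism and $e^2=e$, we have $f(ea)=ef(a)=e^2=e=f(a)$, so injectivity of $f$ yields $ea=a$. Then, composing $f$ with the inclusion $\iota\colon{}_SSe\hookrightarrow{}_SS$ (which is an $S$-homomorphism), I obtain a homomorphism $\varphi:=\iota\circ f\colon{}_SSa\to{}_SS$ with $\varphi(a)=e$ and $\varphi(a)a=ea=a$, so $a$ is act-regular. Notice this argument uses only that $e$ is an idempotent, not that $e\in R$; in fact $\iota$ alone shows that every idempotent is an act-regular element of ${}_SS$.

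For the ``only if'' direction, take $\varphi\colon{}_SSa\to{}_SS$ with $\varphi(a)a=a$ and put $e:=\varphi(a)$, so that $ea=a$. I would first note that $e$ is idempotent: since $\varphi$ is an $S$-homomorphism and $ea=a$, one has $e^2=e\varphi(a)=\varphi(ea)=\varphi(a)=e$. Next, restricting the codomain of $\varphi$ to its image $\varphi(Sa)=S\varphi(a)=Se$ gives a surjective $S$-homomorphism $f\colon{}_SSa\to{}_SSe$, $f(ta)=te$, which sends $a$ to $e$ and is injective (from $te=t'e$ we get $tea=t'ea$, i.e. $ta=t'a$). Hence $f$ is an isomorphism and ${}_SSa\mathrel{\widetilde{\to}}{}_SSe$.

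What then remains is to see that $e\in R$, and this is the step I expect to be the real obstacle. Since the least subact of ${}_SS$ containing $e$ is $Se$, it suffices to show that $Se$ is a regular subact of ${}_SS$, for then $e=1\cdot e\in Se\subseteq R$. I would derive this from the isomorphism ${}_SSe\cong{}_SSa$ just produced, using that act-regularity of all elements of a cyclic act is preserved under isomorphisms of the form $\mathrel{\widetilde{\to}}$; thus it is enough that the cyclic act $Sa$ itself be regular. In the setting relevant here, where $A$ is a regular act, this is automatic since a subact of a regular act is regular; so the essential content of the proof is precisely the step going beyond the cited characterization ${}_SSa\cong{}_SSe$, namely the passage from ``the element $a$ is act-regular'' to ``the whole cyclic act $Sa$, and hence $Se$, is regular''. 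The remaining manipulations with $S$-homomorphisms are routine.
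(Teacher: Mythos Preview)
The paper does not supply its own proof of this proposition; it simply cites \cite{KKM}. So there is no internal argument to compare against, and the question is whether your write-up stands on its own.

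Your treatment of both implications is the standard one and is correct as far as it goes: from a witness $\varphi$ you extract the idempotent $e=\varphi(a)$, verify $e^2=e$, and exhibit the isomorphism ${}_SSa\mathrel{\widetilde{\to}}{}_SSe$; conversely you rebuild $\varphi$ from such an isomorphism. This is exactly the classical characterisation (an element is act-regular iff ${}_SSa\mathrel{\widetilde{\to}}{}_SSe$ for some idempotent $e\in E$).

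You are also right to flag the passage from $e\in E$ to $e\in R$ as the real issue, and right that it goes through once $A$ is assumed regular (so that $Sa$, hence $Se$, is a regular subact). Be aware, however, that this extra hypothesis is genuinely needed: as literally stated for an arbitrary $S$-act ${}_SA$, the ``only if'' direction with $e\in R$ is \emph{false}. For instance, take the commutative monoid $S=\{1,s,0\}$ with $s^2=0$. Then $1\in{}_SS$ is act-regular (via the identity map), yet one checks that $s$ is not act-regular, so ${}_SS$ is not regular and $R=\{0\}$; there is no idempotent $e\in R$ with ${}_SS\cdot 1\mathrel{\widetilde{\to}}{}_SSe$. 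Thus the condition $e\in R$ in the paper should be read either as $e\in E$ (the form in which \cite{KKM} states it) or under the tacit assumption that ${}_SA$ is regular, which is the only situation in which the paper invokes the result (Theorem~\ref{R prim norm iff}). Your argument already covers precisely that case, so nothing more is required for the applications; just do not claim to have proved the proposition in the generality in which it is phrased.
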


\begin{proposition}\label{R=cup Re}\cite{MOPS} If the class ${\mathfrak R}$
of regular $S$-acts is axiomatizable then $R=\bigcup\{e_iR\mid
1\leqslant i\leqslant n \}$ for some $n\geqslant 1$, $e_i\in R$,
$e^2_i=e_i$ $(1\leqslant i\leqslant n)$.
\end{proposition}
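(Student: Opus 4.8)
The plan rests on one small computation together with a compactness argument. The computation is: \emph{if an element $a$ of an arbitrary $S$-act is act-regular, then $ea=a$ for some idempotent $e\in E\cap R$.} Indeed, by Proposition \ref{act-reg element} there is an idempotent $e\in R$ with ${}_SSa\mathrel{\widetilde{\to}}{}_SSe$; fixing an isomorphism $f\colon{}_SSa\to{}_SSe$ with $f(a)=e$ we obtain $f(ea)=ef(a)=ee=e=f(a)$, so $ea=a$ by injectivity of $f$. Applying this to the $S$-act ${}_SS$: since $R$ is a regular subact of ${}_SS$, every $a\in R$ is act-regular and hence $a=ea\in eR$ for some $e\in E\cap R$; the reverse inclusion being clear, $R=\bigcup\{eR\mid e\in E\cap R\}$. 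It remains to reduce this (possibly infinite) union to a finite one, and this is the only place where axiomatizability is used.

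Suppose, toward a contradiction, that $R\neq e_1R\cup\cdots\cup e_nR$ for every $n\geq1$ and all idempotents $e_1,\dots,e_n\in R$. Since every $eR$ is contained in $R$, this means that for each finite set $F\subseteq E\cap R$ there is $a_F\in R$ with $ea_F\neq a_F$ for all $e\in F$. Adjoin a new constant $c$ to the language $L_S$ and put
$$T'=\mathrm{Th}({}_SR)\cup\{ec\neq c\mid e\in E\cap R\}.$$
Any finite subset of $T'$ involves only finitely many of the inequalities $ec\neq c$, say those with $e$ in a finite set $F$, and is satisfied in $({}_SR,a_F)$; hence by the compactness theorem $T'$ has a model $({}_SA,a)$, where $a$ interprets $c$, and the reduct ${}_SA$ is elementarily equivalent to ${}_SR$. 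Now ${}_SR$, being the union of all regular subacts of ${}_SS$, is itself a regular $S$-act, so ${}_SR\in\mathfrak R$; since $\mathfrak R$ is axiomatizable it is closed under elementary equivalence, and therefore ${}_SA\in\mathfrak R$, i.e.\ ${}_SA$ is a regular $S$-act. Then $a$ is act-regular, so by the computation above $ea=a$ for some idempotent $e\in E\cap R$, contradicting ${}_SA\models ec\neq c$. Hence $R=e_1R\cup\cdots\cup e_nR$ for some idempotents $e_i\in R$, and $n\geq1$ because $R\neq\emptyset$.

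The real content is the opening computation: the isomorphism ${}_SSa\mathrel{\widetilde{\to}}{}_SSe$ is exactly what traps an act-regular element inside one of the retracts $eR$, and without it there would be nothing to iterate. After that, the step from an infinite to a finite union is a standard compactness manoeuvre, the one point needing care being that axiomatizability of $\mathfrak R$ yields closure under elementary equivalence (indeed under ultraproducts), which is what guarantees that the model ${}_SA$ produced by compactness is again a regular $S$-act.
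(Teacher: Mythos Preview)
The paper does not give its own proof of this proposition: it is quoted from \cite{MOPS} and stated without argument, so there is nothing to compare your approach against. Your proof is correct. The opening computation is sound (from ${}_SSa\mathrel{\widetilde{\to}}{}_SSe$ with $f(a)=e$ one gets $f(ea)=ef(a)=e=f(a)$, hence $ea=a$), and for $a\in R$ this is equivalent to $a\in eR$ since $e$ is idempotent and $R$ is a subact of ${}_SS$; thus $R=\bigcup\{eR\mid e\in E\cap R\}$. The compactness step is the standard one: axiomatizability of $\mathfrak R$ gives closure under elementary equivalence, so the model produced from $\mathrm{Th}({}_SR)\cup\{ec\neq c\mid e\in E\cap R\}$ is again regular, and the element interpreting $c$ yields the contradiction via the same computation.
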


\begin{lemma}\label{if R prim norm then S reg lin ord} Let the class $\mathfrak {R}$
of regular $S$-acts is axiomatizable and primitive normal. Then
$R$ is a regularly linearly ordered monoid.
\end{lemma}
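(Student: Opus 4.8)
The plan is to argue by contradiction: suppose $\mathfrak R$ is axiomatizable and primitive normal but that $R$ is not regularly linearly ordered, so there exist $a\in R$ and $s,t\in S$ with $Ssa\not\subseteq Sta$ and $Sta\not\subseteq Ssa$ (working inside the semigroup $Sa$). I would first use Proposition \ref{act-reg element} to replace $a$ by a convenient witness: since $a$ is act-regular, there is an idempotent $e\in R$ with ${}_SSa\mathrel{\widetilde\to}{}_SSe$, and since the relevant divisibility conditions are preserved under this isomorphism, it is enough to derive the contradiction for an idempotent $e\in R$, i.e.\ to find $s,t$ with $Sse\not\subseteq Ste$ and $Ste\not\subseteq Sse$. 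Using Remark \ref{Se subsut Sf} this says $se\cdot? $-type equalities fail both ways; the point of passing to an idempotent is that $e$ itself furnishes an element on which the left multiplications $x\mapsto se$, $x\mapsto te$ act in a controlled way.

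Next I would feed a suitable instance of condition (1) of Theorem \ref{prim norm iff} into the primitive-normality criterion. The natural choice is $n=1$ (single-entry tuples), with the three ``elements'' $\bar a_1,\bar a_2,\bar a_3$ chosen among $e$, $se$, $te$ (or $0$-like behaviour got from the idempotent action), and with the index sets $I,J,K$ and the multipliers $s_i,t_j,r^1_k,r^2_k$ picked so that the hypothesis (1) holds automatically from $e^2=e$ and the chosen relations, while the conclusion (2) forces the existence of a $b\in A$ satisfying incompatible divisibility constraints — precisely $Ssb$ meeting $Ste$ and $Stb$ meeting $Sse$ in a way that, combined with $Sse\not\subseteq Ste$ and $Ste\not\subseteq Sse$, is impossible. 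Concretely I expect to take $I=\{1\}$, $J=\{1\}$, $K=\varnothing$, $s_1=s$, $t_1=t$, and exploit that in the regular subact generated by $e$ the only solutions of $sx=se$, $tx=te$ are constrained by the order structure of $Se$ and $Sf$ for the idempotents $e,f$ produced by Proposition \ref{R=cup Re}.

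The role of Proposition \ref{R=cup Re} is to make $R$ ``finitely generated by idempotents'' so that the element $b$ delivered by Theorem \ref{prim norm iff} actually lies in one of the pieces $e_iR$, which lets me pin down $b=e_ib$ and hence control $Sb$ by $Se_i$; without axiomatizability one cannot localize $b$ this way. I would then run the comparison: from $sb = se$ (or $sb = se_i\cdot e$) and $tb = te$ together with $e_i b = b$ one extracts $Sse\subseteq Ste$ or $Ste\subseteq Sse$, contradicting the assumed incomparability. Since $a$ was arbitrary in $R$, this shows $Sa$ is linearly ordered for every $a\in R$, i.e.\ $S$ is regularly linearly ordered.

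The main obstacle I anticipate is the bookkeeping in the middle step: choosing $\bar a_1,\bar a_2,\bar a_3$ and the multiplier data so that (1) is \emph{satisfied} (this needs the idempotent identities and possibly a ``zero'' coming from $e$ acting on elements outside $Se$) while (2) is \emph{strong enough} to force the divisibility collapse. Getting both halves simultaneously — a trivially-true hypothesis and a contradiction-producing conclusion — is the delicate part, and it is where passing from $a$ to the idempotent $e$ via Proposition \ref{act-reg element}, and from $R$ to its idempotent cover via Proposition \ref{R=cup Re}, does the real work.
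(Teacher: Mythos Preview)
Your plan has a genuine gap: you try to run the contradiction entirely inside ${}_SR$ (or inside ${}_SSe$), choosing $\bar a_1,\bar a_2,\bar a_3$ among $e,se,te$ and hoping that Theorem~\ref{prim norm iff} then produces an impossible $b$. But with those choices condition~(1) either fails or is satisfied only trivially (e.g.\ with $a_1=a_2=a_3=e$), and you never explain how a nontrivial instance of (1) can arise in ${}_SSe$ from the incomparability $Sse\not\subseteq Ste$, $Ste\not\subseteq Sse$ alone. There is no reason it should: a single cyclic act ${}_SSe$ need not contain three distinct elements with $s a_1=s a_2$ and $t a_2=t a_3$ while $s a_3\neq s a_1$ and $t a_1\neq t a_3$. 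The appeal to Proposition~\ref{R=cup Re} to ``localize $b$'' does not fix this; it controls where a witness would live, not whether the hypothesis~(1) can be instantiated nontrivially.

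The paper supplies exactly the missing idea: it \emph{manufactures} a regular $S$-act in which the needed configuration exists by construction. Starting from $b,c\in Sa$ with $Sb$ and $Sc$ incomparable and an idempotent $e$ with ${}_SSa\mathrel{\widetilde\to}{}_SSe$, one takes three disjoint copies ${}_SSe_1,{}_SSe_2,{}_SSe_3$ and forms ${}_SA=\bigl(\bigsqcup_{i=1}^3{}_SSe_i\bigr)/\Theta$, where $\Theta$ is the congruence generated by $ce_1\sim ce_2$ and $be_2\sim be_3$. In ${}_SA$ the primitive formula $\Phi(x,y)\rightleftharpoons\exists u\,(bu=x\wedge cu=y)$ then satisfies $\Phi(be_1/\Theta,ce_1/\Theta)$, $\Phi(be_2/\Theta,ce_1/\Theta)$ and $\Phi(be_2/\Theta,ce_3/\Theta)$, so primitive normality of the class forces $\Phi(be_1/\Theta,ce_3/\Theta)$; but the incomparability of $Sb$ and $Sc$ ensures $be_1/\Theta=\{be_1\}$ and $ce_3/\Theta=\{ce_3\}$ are singletons lying in disjoint copies, so no $u^0$ can hit both. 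The construction of this amalgam is the heart of the argument and is absent from your proposal.
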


\begin{proof}
Assume that $Sc\not\subseteq Sb$ and $Sb\not\subseteq Sc$ for some
$b,c\in Sa$ и $a\in R$. There exists $e\in E$ such that ${}_SSa
\mathrel{\widetilde{\to}} {}_SSe$. Let $_SSe_i$ $(1\leqslant
i\leqslant 3)$ be the pairwise disjoint copies of $S$-act $_SSe$,
$\Theta$ be a congruence of $S$-act
$\bigsqcup\limits_{i=1}^3{_SS}e_i$ generated by $\{\langle
ce_1,ce_2\rangle,\langle be_2,be_3\rangle\}$. Let $_SA$ denote an
$S$-act $\bigsqcup\limits_{i=1}^3{_SSe}_i/\Theta$, $d/\Theta$
denote a $\Theta$-class of $d\in \bigsqcup\limits_{i=1}^3{Se}_i$.
Let
$$\Phi(x,y)\rightleftharpoons\exists u(bu=x\wedge cu=y).$$
Hence
$$_SA\models\Phi(be_1/\Theta,ce_1/\Theta)\wedge\Phi(be_2/\Theta,ce_1/\Theta)
\wedge\Phi(be_2/\Theta,ce_3/\Theta).$$
 Since the class $\mathfrak {R}$
is primitive normal then
$_SA\models\Phi(be_1/\Theta,ce_3/\Theta)$. Let $u^0\in
\bigsqcup\limits_{i=1}^3{Se}_i$ such that $_SA\models
bu^0/\Theta=be_1/\Theta\wedge cu^0/\Theta=ce_3/\Theta$. Then
$be_1/\Theta,ce_3/\Theta\in Su^0/\Theta$. But
$be_1/\Theta=\{be_1\}$, $ce_3/\Theta=\{ce_3\}$ and $S$-acts
$_SSe_1$, $_SSe_3$ do not intersect. A contradiction.
\end{proof}

\begin{lemma}\label{if R prim norm then Se and Sf comparable} Let $S$
be a commutative monoid, the class $\mathfrak {R}$ of regular
$S$-acts is axiomatizable and primitive normal. Then for any
idempotents $e,f\in R$ either $Se\subseteq Sf$ or $Sf\subseteq
Se$.
\end{lemma}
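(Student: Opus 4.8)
The plan is to argue by contradiction: suppose $Se\not\subseteq Sf$ and $Sf\not\subseteq Se$, and produce, inside a concrete regular $S$-act, two primitive copies that meet but are not equal, contradicting primitive normality. First, by the Remark and commutativity, $Se\subseteq Sf$ holds iff $ef=e$, and $Sf\subseteq Se$ holds iff $fe=f$, i.e.\ iff $ef=f$; so the assumption says $ef\neq e$ and $ef\neq f$. Set $g:=ef$. Then $g^{2}=efef=e^{2}f^{2}=ef=g$, so $g$ is an idempotent; moreover $Sg=S(ef)\subseteq Se\cap Sf$ (using $s\,ef=(sf)e=(se)f$) and $g=ef=fe\in Se\cap Sf$; since $Sg\subseteq Sf$ and $e\notin Sf$ (else $Se\subseteq Sf$), we get $e\notin Sg$; and $g\neq e$, $g\neq f$ by the assumption.

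Next I would work with $A:={}_{S}(Se\cup Sf)$, the subact of ${}_{S}S$ with underlying set $Se\cup Sf$; this is a subact because $s(Se\cup Sf)=sSe\cup sSf\subseteq Se\cup Sf$. Since $e,f\in R$ and $R$ is a subact of ${}_{S}S$, we have $Se\cup Sf\subseteq R$, so every element of $A$ lies in $R$ and is hence act-regular; therefore $A$ is a regular $S$-act, $A\in\mathfrak{R}$, and $A$ is primitive normal. Consider the primitive formula
$$\Phi(x,y)\rightleftharpoons\exists u\,(fu=x\wedge eu=y),$$
so that $\Phi(A,y_{0})=\{fu\mid u\in A,\ eu=y_{0}\}$. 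For $u\in Se$ one has $eu=u$, whereas for $u=tf\in Sf$ one has $eu=t(ef)=tg\in Sg$; since $e\notin Sg$, the equation $eu=e$ forces $u\in Se$ and then $u=e$, so $\Phi(A,e)=\{fe\}=\{g\}$. On the other hand $eg=e(ef)=ef=g$ and $ef=g$, so $u=g$ gives $g=fg\in\Phi(A,g)$ and $u=f$ gives $f=ff\in\Phi(A,g)$; thus $\{g,f\}\subseteq\Phi(A,g)$.

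Finally, $g\in\Phi(A,e)\cap\Phi(A,g)$, so these two primitive copies are not disjoint, and primitive normality of $A$ forces $\Phi(A,e)=\Phi(A,g)$; but $f\in\Phi(A,g)$ while $f\notin\Phi(A,e)=\{g\}$ because $f\neq g$, a contradiction. Hence $Se\subseteq Sf$ or $Sf\subseteq Se$. I expect the only delicate points to be the few commutative‑monoid computations around $g=ef$ — that $g$ is idempotent, that $Sg\subseteq Se\cap Sf$, and above all that $e\notin Sg$, which is precisely where the incomparability hypothesis enters — together with the routine remark that a subact of the regular subact $R$ is again regular, so that $A$ really lies in $\mathfrak{R}$; the passage from ``$\mathfrak{R}$ is primitive normal'' to ``$A$ is primitive normal'' is the same mechanism already used in Lemma \ref{if R prim norm then S reg lin ord}, so it poses no new difficulty. (Equivalently one could feed the same data $\bar a_{1}=e$, $\bar a_{2}=g$, $\bar a_{3}=f$, one index with $s=f$ and one with $t=e$, empty $K$, into the criterion of Theorem \ref{prim norm iff} and check directly that no suitable $\bar b$ exists in $A$.)
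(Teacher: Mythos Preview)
Your proof is correct. The overall strategy---assume incomparability, work in a concrete regular $S$-act, and exhibit two overlapping but unequal primitive copies---is the same as the paper's, but the execution differs in two notable ways. First, the paper works in ${}_SR$ with the formula $\Phi(x,y)\rightleftharpoons\exists u(eu=ex\wedge fu=fy)$ and, after deducing $\Phi(e,f)$, obtains an element $u^0$ with $e,f\in Su^0$; it then \emph{invokes Lemma~\ref{if R prim norm then S reg lin ord}} (regular linear order of each $Sa$) to finish. Your formula $\exists u(fu=x\wedge eu=y)$ and the explicit computation of $\Phi(A,e)=\{g\}$ versus $\{g,f\}\subseteq\Phi(A,g)$ give the contradiction directly, with no appeal to the previous lemma. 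Second, you work in the smaller subact $A=Se\cup Sf$ rather than all of $R$; this is harmless since $A\subseteq R$ is regular, and it makes the key step---that $eu=e$ forces $u=e$---transparent. The paper's route is slightly shorter on the page because it recycles Lemma~\ref{if R prim norm then S reg lin ord}; yours is more self-contained and makes the role of the idempotent $g=ef$ (and the crucial fact $e\notin Sg$) completely explicit.
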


\begin{proof}
Suppose that $Se\not\subseteq Sf$ and $Sf\not\subseteq Se$ for
some idempotents $e,f\in R$. Note that $ef\in E$. If $Sef=Sf$ then
$ef=fef=f$, that is $Sf\subseteq Se$, a contradiction. Hence
$Sef\subset Sf$. Similarly $Sef=Sfe\subset Se$. Let
$$\Phi(x,y)\rightleftharpoons\exists u(eu=ex\wedge fu=fy).$$
So
 $$_SR\models\Phi(e,e)\wedge\Phi(f,f)
\wedge\Phi(f,e).$$ Since the class $\mathfrak {R}$ is primitive
normal then $_SR\models\Phi(e,f)$. Let $u^0\in R$ such that
$_SR\models e=eu^0\wedge f=fu^0$. Then $e,f\in Su^0$. Hence by
Lemma \ref{if R prim norm then S reg lin ord} either $Se\subseteq
Sf$ or $Sf\subseteq Se$, but that contradicts to assumption.
\end{proof}

\begin{theorem}\label{R prim norm iff}
Let $S$ be a commutative monoid and the class $\mathfrak {R}$ of
regular $S$-acts is axiomatizable. The class $\mathfrak {R}$ is
primitive normal if and only if a semigroup $R$ is linearly
ordered.
\end{theorem}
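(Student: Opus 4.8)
The plan is to prove both implications, using throughout the auxiliary fact that $Ra=Sa$ for every $a\in R$. Indeed, $a\in R$ is act-regular, so by Proposition~\ref{act-reg element} there are an idempotent $g\in R$ and an isomorphism $\phi\colon {}_SSa\to {}_SSg$ with $\phi(a)=g$; applying $\phi^{-1}$ to $g=gg$ gives $a=ga$, whence $sa=s(ga)=(sg)a$ with $sg\in SR\subseteq R$, so $Sa\subseteq Ra$, the reverse inclusion being trivial. By commutativity one also gets $a=ga=ag\in Sg$, hence ${}_SSa\subseteq {}_SSg$.

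For the implication ``$\mathfrak R$ primitive normal $\Rightarrow$ $R$ linearly ordered'' I would fix $a,b\in R$; by the remark above it suffices to show $Sa\subseteq Sb$ or $Sb\subseteq Sa$. Pick idempotents $g_a,g_b\in R$ with $a\in Sg_a$, $b\in Sg_b$ as above. By Lemma~\ref{if R prim norm then Se and Sf comparable} we may assume $Sg_a\subseteq Sg_b$, so $a,b\in Sg_b$; since $g_b\in R$, Lemma~\ref{if R prim norm then S reg lin ord} says ${}_SSg_b$ is linearly ordered, and therefore $Sa$ and $Sb$ are comparable. Hence $Ra=Sa$ and $Rb=Sb$ are comparable, i.e.\ $R$ is linearly ordered.

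For the converse, ``$R$ linearly ordered $\Rightarrow$ $\mathfrak R$ primitive normal'', I would first reduce $R$ to the form $R=eR$ for a single idempotent $e$. By Proposition~\ref{R=cup Re}, $R=\bigcup_{i=1}^n e_iR$ with idempotents $e_i\in R$; by linear order the finitely many ideals $Re_i$ have a largest one, say $Re_1$, and then $e_i\in Re_1$ forces $e_iR\subseteq e_1R$ by commutativity, so $R=e_1R$. Put $e=e_1$, which is the identity of the commutative monoid $R$, and note $Sa\subseteq Se$ for every $a\in R$. Since $\mathfrak R$ is axiomatizable, by Theorem~\ref{prim norm iff} it suffices to verify, for an arbitrary regular $S$-act ${}_SA$, the implication (1)$\Rightarrow$(2). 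Working one coordinate at a time, this amounts to: given $x_1,x_2,x_3\in A$ with $s_ix_1=s_ix_2$ $(i\in I)$, $t_jx_2=t_jx_3$ $(j\in J)$, $r^1_kx_m=r^2_kx_m$ $(m\in\{1,2,3\},\ k\in K)$, to produce $c\in A$ with $s_ic=s_ix_3$, $t_jc=t_jx_1$, $r^1_kc=r^2_kc$. Each $x_m$ is act-regular, so ${}_SSx_m\mathrel{\widetilde{\to}}{}_SSg_m$ for an idempotent $g_m\in R$, and since $Rg_m=Sg_m$ and $R$ is linearly ordered, ${}_SSg_1,{}_SSg_2,{}_SSg_3$ are pairwise comparable. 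Transporting $x_1$ and $x_3$ along these isomorphisms into a common cyclic act and amalgamating them over $x_2$ one builds $c$; collecting the coordinates yields $\bar b$ satisfying (2).

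The main obstacle will be this last construction of $c$. One must carry out the amalgamation explicitly, distinguishing cases according to which of ${}_SSg_1,{}_SSg_2,{}_SSg_3$ is smallest and which of the index sets $I,J,K$ are empty at the coordinate in question, and using the retractions $\varphi_m\colon {}_SSx_m\to{}_SS$ coming from act-regularity together with the comparability of the ${}_SSg_m$ to make the definition of $c$ consistent on the three ``lobes'' indexed by $I$, $J$, $K$. Everything else — the reduction $R=eR$, the identity $Ra=Sa$, and the whole necessity direction — is short and uses only the cited propositions and lemmas.
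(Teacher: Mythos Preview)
Your necessity direction is essentially the paper's: both combine Lemma~\ref{if R prim norm then S reg lin ord} and Lemma~\ref{if R prim norm then Se and Sf comparable}, and your auxiliary identity $Ra=Sa$ plays the same role as the paper's observation $fR=Sf$. The organization differs only cosmetically.

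The sufficiency direction has a genuine gap. Your proposed construction of $c$ by ``transporting $x_1,x_3$ into a common cyclic act and amalgamating over $x_2$'' cannot work as stated: $A$ is a \emph{given} regular $S$-act, and there is no mechanism for manufacturing new elements of $A$ by gluing cyclic pieces; the retractions $\varphi_m$ land in ${}_SS$, not in $A$, and any amalgam you form lives in some other $S$-act. The case distinctions you anticipate (which $g_m$ is smallest, which of $I,J,K$ are empty) are also not the relevant ones.

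The paper's key insight, which your sketch misses, is that no new element is ever needed: one may always take $\bar b(l)\in\{\bar a_1(l),\bar a_3(l)\}$. At a fixed coordinate $l$ the linear order of $R$ makes the ideals $\{Ss_i\bar e_k(l):i\in I_l\}$ a finite chain, so some $Ss^k\bar e_k(l)$ is maximal; similarly some $St^k\bar e_k(l)$ dominates the $t_j$'s. After a preliminary split on which of $S\bar e_1(l),S\bar e_2(l),S\bar e_3(l)$ is largest, one compares $Ss^k\bar e_k(l)$ with $St^k\bar e_k(l)$ and a short computation from (1) shows that either all the equalities $t_j\bar a_3(l)=t_j\bar a_1(l)$ already hold (so $\bar b(l)=\bar a_3(l)$ works) or all the equalities $s_i\bar a_3(l)=s_i\bar a_1(l)$ hold (so $\bar b(l)=\bar a_1(l)$ works). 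The $K$-constraints in (2) are then automatic, since $\bar a_1(l)$ and $\bar a_3(l)$ already satisfy them by (1). Incidentally, the reduction $R=eR$ you carry out is not used in this direction at all.
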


\begin{proof} \emph{Necessity}. Let the class $\mathfrak {R}$ is primitive normal.
We will show that a semigroup $R$ is linearly ordered. Since the
class $\mathfrak {R}$ is axiomatizable then by Proposition
\ref{R=cup Re} $R=\bigcup\{e_iR\mid 1\leqslant i\leqslant m \}$
for some $m\geqslant 1$ and idempotents $e_i\in R$ $(1\leqslant
i\leqslant m)$. As $sf=fsf\in fR$ for all $s\in S$ then $fR=Sf$,
where $f\in R\cap E$. So in view of commutativity of a  monoid $S$
and by Lemma \ref{if R prim norm then Se and Sf comparable}
$R=eR=Se$ for some idempotent $e\in R$. Thus by Lemma \ref{if R
prim norm then S reg lin ord} $R$ is a linearly ordered semigroup.

\emph{Sufficiency}. Let $_SA\in \mathfrak {R}$, $I, J, K$ be the
pairwise disjoint finite sets of indexes, $s_i,l_j,r^1_k,r^2_k\in
S$ ($i\in I, j\in J, k\in K$), $\bar a_1, \bar a_2, \bar a_3\in
A$, $|\bar a_1|=|\bar a_2|=|\bar a_3|=n$ and (1) holds, where
$0\leqslant l_i,l_j,l_k\leqslant n-1$. By Proposition \ref{act-reg
element} there exists the tuples of idempotents $\bar e_1,\bar
e_2,\bar e_3\in R$ such that $|\bar e_1|=|\bar e_2|=|\bar e_3|=n$
and ${}_SS\bar a_i(j) \mathrel{\widetilde{\to}} {}_SS\bar e_i(j)$
for all $i,j$, $0\leqslant i\leqslant 3$, $0\leqslant j\leqslant
n-1$. Then $\bar a_i(j)=\bar e_i(j)\bar a_i(j)$ for all $i,j$,
$0\leqslant i\leqslant 3$, $0\leqslant j\leqslant n-1$.

We will construct a tuple $\bar b$ such that (2) holds.

Let us fix $l\in \{0,1,\ldots,n-1\}$. We put $I_l=\{i\in I\mid
l_{i}=l\}$, $J_l=\{j\in J\mid l_{j}=l\}$. Let $1\leqslant
k\leqslant 3$. Since by the condition the set $\{Sd\mid
Sd\subseteq Se_k(l)\}$ is linearly ordered under the inclusion,
then there exist $s^k,t^k\in S$ such that
$Ss^ke_k(l)=\max\{Ss_ie_k(l)\mid i\in I_l\}$ and
$St^ke_k(l)=\max\{St_je_k(l)\mid j\in J_l\}$. Hence for all $i\in
I_l$ and $j\in J_l$ there are $r_i^k\in S$ and $r_j^k\in S$ such
that $s_i\bar e_k(l)=r_i^ks^k\bar e_k(l)$ and $t_j\bar
e_k(l)=r_j^kt^k\bar e_k(l)$, so $s_i\bar a_k(l)=r_i^ks^k\bar
a_k(l)$ and $t_j\bar a_k(l)=r_j^kt^k\bar a_k(l)$.

Assume that
\begin{equation}\tag{9}
S\bar e_1(l)\subseteq S\bar e_2(l)\mbox { and } S\bar
e_3(l)\subseteq S\bar e_2(l),
\end{equation}
that is $\bar e_k(l)=\bar e_2(l)\bar e_k(l)$ and $\bar a_k(l)=\bar
e_2(l)\bar a_k(l)$ for all $k$, $1\leqslant k \leqslant 3$. Since
the semigroup $S\bar e_2(l)$ is linearly ordered, without loss of
generality we can suppose that $S\bar e_1(l)\subseteq Se_3(l)$,
i.e. $\bar e_1(l)=\bar e_1(l)\bar e_3(l)$. Assume that $St^2\bar
e_2(l)\subseteq Ss^2\bar e_2(l)$, i.e. $t^2\bar e_2(l)=r^2s^2\bar
e_2(l)$ for some $r^2\in S$. Let $j\in J_l$. Then $t_j\bar
e_2(l)=r_j^2t^2\bar e_2(l)=r_j^2r^2s^2\bar e_2(l)$ and $t_j\bar
a_2(l)=r_j^2r^2s^2\bar a_2(l)$. So in view (1) we have
$$t_j\bar a_3(l)=t_j\bar a_2(l)=r_j^2r^2s^2\bar a_2(l)
 =r_j^2r^2s^2\bar a_1(l)=r_j^2r^2s^2\bar e_2(l)\bar a_1(l)=t_j\bar e_2(l)\bar
a_1(l)=t_j\bar a_1(l),$$ that is $t_j\bar a_3(l)=t_j\bar a_1(l)$.
We put $\bar b(l)=\bar a_3(l)$. If $Ss^2\bar e_2(l)\subseteq
St^2\bar e_2(l)$ then in the same way we have $\bar b(l)=\bar
a_1(l)$.

Let (9) be wrong. Then without loss of generality we can suppose
that
$$
S\bar e_3(l)\subseteq S\bar e_1(l)\mbox { and } S\bar
e_2(l)\subseteq S\bar e_1(l),
$$
that is $\bar e_k(l)=\bar e_1(l)\bar e_k(l)$ and $\bar a_k(l)=\bar
e_1(l)\bar a_k(l)$ for all $k$, $1\leqslant k \leqslant 3$.

Assume that $St^1\bar e_1(l)\subseteq Ss^1\bar e_1(l)$, that is
$t^1\bar e_1(l)=r^1s^1\bar e_1(l)$ for some $r^1\in S$. Let $j\in
J_l$. Then $t_j\bar e_1(l)=r_j^1t^1\bar e_1(l)=r_j^1r^1s^1\bar
e_1(l)$ and $t_j\bar a_1(l)=r_j^1r^1s^1\bar a_1(l)$. Hence using
(1) we get
$$t_j\bar a_3(l)=t_j\bar a_2(l)=t_j\bar e_1(l)\bar a_2(l)=r_j^1r^1s^1\bar e_1(l)\bar a_2(l)
=r_j^1r^1s^1\bar a_2(l)=r_j^1r^1s^1\bar a_1(l)=t_j\bar a_1(l),$$
that is $t_j\bar a_3(l)=t_j\bar a_1(l)$. We set $\bar b(l)=\bar
a_3(l)$.

Suppose that $Ss^1\bar e_1(l)\subseteq St^1\bar e_1(l)$, that is
$s^1\bar e_1(l)=r_1t^1\bar e_1(l)$ for some $r_1\in S$. Let $i\in
I_l$. Then $s_i\bar e_1(l)=r_i^1s^1\bar e_1(l)=r_i^1r_1t^1\bar
e_1(l).$ So using (1) we get
$$s_i\bar a_3(l)=s_i\bar e_1(l)\bar a_3(l)=r_i^1r_1t^1\bar e_1(l)\bar a_3(l)
=r_i^1r_1t^1\bar e_1(l)\bar a_2(l) =$$
 $$=s_i\bar e_1(l)\bar
a_2(l)=s_i\bar a_2(l)=s_i\bar a_1(l),$$
 that is $s_i\bar
a_3(l)=s_i\bar a_1(l)$. We set $\bar b(l)=\bar a_1(l)$.

Therefore the tuple $\bar b$ such that (2) holds is construct.
\end{proof}

The following \em example \em shows that the condition of
commutativity of monoid  $S$ in Theorem \ref{R prim norm iff} is
essentially.

Let $S=\{e_1,e_2\}\cup T\cup\{1\}$, where $T$ is a semigroup with
$\{a,b\}$ generators  and $ab^2=ab$, $ba^2=ba$ defining
relationships. Binary operation on $S$ is defined in the following
way:  $se_i=e_i$, $e_it=e_i$ for all $i\in \{1,2\}$, $s\in S$,
$t\in T$; $1$ is an unit element. It is easy to check that $S$ is
a monoid under the operation and $E=\{e_1,e_2,1\}$. Note that
$Se_i=\{e_i\}$ for all $i\in \{1,2\}$. Since $ba^2=ba$ and $ba\neq
b$ then the assertion $_SSa\mathrel{\widetilde{\to}} _SS\cdot 1$
is false. Since $a^2\neq a$ and $ae_i=e_i$ then the assertion
$_SSa\mathrel{\widetilde{\to}} _SS e_i$ is false for all $i$,
$i\in \{1,2\}$. In the same way it is shown the falsity of the
assertion $_SSb\mathrel{\widetilde{\to}} _SS\cdot 1$ and
$_SSb\mathrel{\widetilde{\to}} _SS e_i$ ($i\in \{1,2\}$).

So  $R=\{e_1,e_2\}$. It is clear that the semigroup $R$ is not
linearly order. For all $S$-act $_SA\in \mathfrak {R}$, $a\in A$
and $s\in S$ we have $sa=a$, that is any regular $S$-act is
represent as a coproduct of one-element $S$-acts. Hence the class
$\mathfrak R$ is axiomatizable and primitive normal.

\section{Antiadditive Classes of Regular Acts}

The axiomatizable classes of regular $S$-acts were investigated in
\cite{St2}. Particularly in that work there was proved the
following proposition.

\begin{proposition}
If the class ${\mathfrak R}$ of regular $S$-acts is axiomatizable
then $R=\bigcup\{e_iR\mid 1\leqslant i\leqslant n \}$ for some
$n\geqslant 1$, $e_i\in R$, $e^2_i=e_i$ $(1\leqslant i\leqslant
n)$.
\end{proposition}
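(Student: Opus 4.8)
The plan is to prove the direction that is actually needed here — that axiomatizability of $\mathfrak R$ forces the finite decomposition — by a compactness argument resting on Proposition~\ref{act-reg element}, which renders ``act-regular'' as a condition of the form ``there exists an idempotent $e\in R$ such that $\ldots$''. I begin with bookkeeping. Since $\mathfrak R$ is axiomatizable, write $\mathfrak R=\mathrm{Mod}(\Sigma)$ for a set $\Sigma$ of $L_S$-sentences; as the union of all regular subacts of $_SS$ is regular, $_SR$ is a regular $S$-act, so $_SR\models\Sigma$ and hence $\Sigma\subseteq\mathrm{Th}({}_SR)$. Consequently every model of $\mathrm{Th}({}_SR)$ lies in $\mathfrak R$, so all of its elements are act-regular. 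Also $R$, being a union of subacts of $_SS$, is a left ideal of $S$, whence $eR\subseteq SR\subseteq R$ for $e\in R$; therefore, for idempotents $e_1,\ldots,e_n\in R$, the equality $R=\bigcup_{i=1}^n e_iR$ fails exactly when some $a\in R$ lies in no $e_iR$. Finally, for $a\in R$ one has $a\in eR\iff ea=a$ (if $ea=a$ then $a=e\cdot a\in eR$; and $a=er$ with $r\in R$ gives $ea=e^2r=er=a$), and $R\cap E\neq\varnothing$ since $R\neq\varnothing$ and any $a\in R$, being act-regular, yields an idempotent $e\in R$ by Proposition~\ref{act-reg element}.

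Now assume, towards a contradiction, that $R\neq\bigcup_{i=1}^n e_iR$ for every finite family of idempotents of $R$. By the above, for each finite $F\subseteq R\cap E$ one may choose $a_F\in R$ with $ea_F\neq a_F$ for all $e\in F$. Adjoin a fresh constant symbol $c$ to $L_S$ and set
$$T'=\mathrm{Th}({}_SR)\ \cup\ \{\, ec\neq c \ :\ e\in R\cap E \,\}.$$
Any finite $T_0\subseteq T'$ mentions only finitely many idempotents $e_1,\ldots,e_k\in R\cap E$ in its inequalities, and interpreting $c$ as $a_{\{e_1,\ldots,e_k\}}$ in $_SR$ yields a model of $T_0$; hence by the compactness theorem $T'$ has a model $(\mathcal A,c^{\mathcal A})$. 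Since $\mathcal A\models\mathrm{Th}({}_SR)\supseteq\Sigma$, we get $\mathcal A\in\mathfrak R$, so $c^{\mathcal A}$ is act-regular. By Proposition~\ref{act-reg element} there are an idempotent $e\in R$ and an isomorphism $g\colon{}_SSc^{\mathcal A}\to{}_SSe$ with $g(c^{\mathcal A})=e$; applying $g^{-1}$ to $e=e\cdot e$ gives $c^{\mathcal A}=g^{-1}(e)=e\cdot g^{-1}(e)=ec^{\mathcal A}$, which contradicts the sentence $ec\neq c$ of $T'$. Hence the desired finite decomposition holds for some $n\geqslant1$ and idempotents $e_i\in R$.

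The one genuinely delicate point is the claim that the single family $\{\,ec\neq c:e\in R\cap E\,\}$ already forces $c$ to be non-act-regular: this is exactly where the existential clause ``there is an idempotent $e\in R$'' of Proposition~\ref{act-reg element} is used, since its negation is the \emph{universal} statement ``$ec\neq c$ for all idempotents $e\in R$''. The complementary point is that the elements $a_F$ supplied by the assumed failure of the finite decomposition are precisely what guarantees finite satisfiability of $T'$. Everything else is routine.
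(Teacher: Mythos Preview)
The paper does not actually prove this proposition: it is quoted twice (once as Proposition~\ref{R=cup Re} with a citation to \cite{MOPS}, and again at the start of Section~3 with a citation to \cite{St2}), and in both places no argument is given. So there is no in-paper proof to compare against.

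Your compactness argument is correct and is the natural way to extract such a finiteness statement from axiomatizability. The key ingredients are exactly the ones you isolate: Proposition~\ref{act-reg element} turns ``$a$ is act-regular'' into the first-order condition ``$ea=a$ for some idempotent $e\in R$'', so the type $\{\,ec\neq c : e\in R\cap E\,\}$ asserts precisely that $c$ is not act-regular; the assumed failure of every finite cover $\bigcup_{i\leqslant n}e_iR$ gives finite satisfiability of this type in ${}_SR$; and any model of $\mathrm{Th}({}_SR)$ lies in $\mathfrak R$ because $\mathfrak R$ is axiomatizable and ${}_SR\in\mathfrak R$. The deduction $ec^{\mathcal A}=c^{\mathcal A}$ from the isomorphism $g$ with $g(c^{\mathcal A})=e$ is clean (either via $g^{-1}$ as you wrote, or equivalently $g(ec^{\mathcal A})=e\,g(c^{\mathcal A})=e^2=e=g(c^{\mathcal A})$ and injectivity of $g$). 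One cosmetic remark: the equivalence $a\in eR\Longleftrightarrow ea=a$ that you record is just the instance of Remark~\ref{Se subsut Sf}(1) with $a\in R$, so you could cite that rather than reprove it.
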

This statement implies
\begin{corollary} \label{R is the cup of eR}
If the class ${\mathfrak R}$ of regular $S$-acts is axiomatizable,
monoid $S$ is commutative and  $R$ will be a linearly order
semigroup then $R=eR$ for some idempotent $e\in R$.
\end{corollary}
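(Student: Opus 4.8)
The plan is to combine the two cited facts from the excerpt. By the Proposition quoted just above Corollary \ref{R is the cup of eR}, axiomatizability of $\mathfrak{R}$ gives $R=\bigcup\{e_iR\mid 1\leqslant i\leqslant n\}$ for finitely many idempotents $e_i\in R$. The first observation I would record is that, because $S$ is commutative, each $e_iR$ is actually equal to $Se_i$: indeed, for $s\in S$ and $e\in R\cap E$ one has $se = ese \in eR$ (using $e^2=e$ and commutativity), so $Se\subseteq eR$, while the reverse inclusion $eR\subseteq Se$ is immediate since $er = re\in Se$. Thus $R=\bigcup_{i=1}^n Se_i$.

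Next I would use that $R$ is linearly ordered as a semigroup: for the idempotents $e_1,\dots,e_n$ the principal left ideals $Se_1,\dots,Se_n$ are pairwise comparable under inclusion, so one of them, say $Se_{i_0}$, contains all the others. Write $e:=e_{i_0}$. Then $R=\bigcup_{i=1}^n Se_i = Se_{i_0}=Se = eR$, which is exactly the assertion. So the corollary follows with this $e$.

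The only point needing a little care — and the step I would flag as the main (mild) obstacle — is justifying that $R$ being "linearly ordered" as a semigroup in the sense defined in the paper (for all $a,b\in R$, $Ra\subseteq Rb$ or $Rb\subseteq Ra$) lets us compare the $Se_i$'s; here one uses commutativity again to identify $Ra$ with $Sa$ up to the relevant inclusions, or more directly one simply observes that the finite collection of principal ideals $\{Se_i\}$, being pairwise comparable, has a maximum element. Everything else is a one-line manipulation with idempotents and commutativity, so no genuine difficulty arises.
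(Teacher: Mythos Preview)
Your proof is correct and matches the paper's intended argument. The paper does not spell out the corollary separately (it just writes ``This statement implies''), but exactly this computation appears in the Necessity part of Theorem~\ref{R prim norm iff}: there the paper shows $fR=Sf$ via $sf=fsf\in fR$ for an idempotent $f\in R$, and then picks a maximal one among the finitely many $Se_i$. Your concern about $Ra$ versus $Sa$ is easily settled as you indicate: for an idempotent $e\in R$ and commutative $S$ one has $Se=e R=Re$ (since $se=(se)e\in Re$ and $R$ is an $S$-subact of ${}_SS$), so the linear order on $R$ does give comparability of the $Se_i$.
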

Throughout $T$ will denote a theory of the axiomatizable primitive
normal class ${\mathfrak R}$ of regular $S$-acts, $S$ will be a
commutative monoid and  $R$ is a linearly order semigroup.

A proof of following Lemma is a modification of a proof of Lemma 2
in \cite{St1}.

\begin{lemma}\label{conj atom form} Let $\Phi(x_0,\bar x)$ be a conjunction of
atomic formulas, $\bar x= \langle x_1,\ldots,x_n\rangle$. Then
there is a formula $\Psi(\bar x)$, which is a conjunction of
atomic formulas, $s,t\in S$  and $i$, $0\leqslant i\leqslant n$,
such that in theory $T$
 $$\Phi(x_0,\bar x)\equiv\Psi(\bar
x)\wedge tx_i=sx_0.$$
\end{lemma}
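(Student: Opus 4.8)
The plan is to reduce the number of atomic formulas in which $x_0$ occurs by repeatedly using the linear order on $R$ together with the act-regularity of the elements witnessing the variables. First I would isolate, among the conjuncts of $\Phi(x_0,\bar x)$, those in which $x_0$ actually appears; write them as $p_1 x_0 = q_1 z_1,\ \dots,\ p_m x_0 = q_m z_m$, where each $z_k$ is one of $x_0,\dots,x_n$ (the case $z_k=x_0$ is a conjunct $p_k x_0 = q_k x_0$ purely about $x_0$, which I will absorb into $\Psi$ using a regularity identity as below). All remaining conjuncts involve only $\bar x$ and go straight into $\Psi(\bar x)$. The goal is to show that in $T$ the conjunction of the $x_0$-conjuncts is equivalent, modulo a conjunction of atomic formulas in $\bar x$ alone, to a single formula $t x_i = s x_0$.

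The key mechanism is this: for any model $_SA\models T$ and any $a_0\in A$, by Proposition \ref{act-reg element} there is an idempotent $e\in R$ with ${}_SSa_0\mathrel{\widetilde{\to}}{}_SSe$, so $p a_0 = q a_0$ in $A$ iff $pe=qe$ in $S$, and $p a_0 = q a_k$ forces (applying the relevant idempotents) a comparison in $R$. Because $R$ is linearly ordered (Corollary \ref{R is the cup of eR} even gives $R=eR$), among the left coefficients $p_1 e,\dots,p_m e$ acting on the $a_0$-side there is a maximal one, say $Sp_r e = \max_k S p_k e$; then each $p_k e = u_k p_r e$ for suitable $u_k\in S$, hence each conjunct $p_k x_0 = q_k z_k$ is, in $T$, a consequence of the single conjunct $p_r x_0 = q_r z_{r}$ together with the $\bar x$-conjunct $u_k q_r z_{r} = q_k z_k$ — provided all the $z_k$ coincide. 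Setting $t:=p_r$, $s:=q_r$ (after relabelling so the surviving conjunct reads $t x_i = s x_0$ with $x_i := z_r$), and adjoining all the derived identities $u_k q_r z_r = q_k z_k$ to $\Psi$, gives the claimed equivalence. I would handle the pure $x_0$-conjuncts $p_k x_0 = q_k x_0$ by noting they are equivalent in $T$ to $p_k e = q_k e$, an $S$-sentence that is either vacuous or makes the whole formula inconsistent; in the consistent case they impose no constraint and can be dropped (or, if one insists on keeping the displayed shape, folded into the coefficient maximization on the $a_0$-side).

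The main obstacle is the situation where the witnesses $z_k$ are several distinct variables among $x_1,\dots,x_n$, so that a single conjunct $t x_i = s x_0$ cannot literally entail conjuncts mentioning a different $x_j$. This is exactly where primitive normality of $T$ (equivalently, by Theorem \ref{R prim norm iff}, linear order of $R$) must be invoked a second time, in the spirit of the proof of Theorem \ref{prim norm iff}: using the maximality of $Sp_r e$ and the linear order, one shows the other $z_j$-conjuncts are forced — in every model of $T$ — by the $x_i$-conjunct plus relations among the $z$'s that involve only $\bar x$. Concretely, if $p_k x_0 = q_k z_k$ and $p_r x_0 = q_r z_r$ both hold with $p_k e = u_k p_r e$, then $u_k q_r z_r = q_k z_k$ holds in $A$ as a relation purely between $z_r$ and $z_k$ (both entries of $\bar x$), so it may be added to $\Psi$; conversely $\Psi\wedge (t x_i = s x_0)$ re-derives every original conjunct. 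Chasing this through all six index-types of conjuncts, as in Theorem \ref{prim norm iff}, is routine bookkeeping once the maximal-coefficient trick is in place; I would organize it by first choosing $x_i$ to be a witness realizing the maximal left-coefficient on $x_0$, then showing every other $x_0$-conjunct is $T$-equivalent, relative to $\bar x$-atoms, to a consequence of $t x_i = s x_0$.
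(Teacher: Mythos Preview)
Your core mechanism is exactly the paper's: use Corollary~\ref{R is the cup of eR} to get $R=eR$, hence $T\vdash\forall x\,(x=ex)$, and then compare the $x_0$-side coefficients inside the linearly ordered semigroup $R=Se$ so that all but one $x_0$-conjunct can be rewritten as $\bar x$-only atoms. The paper carries this out by induction on the number of conjuncts containing $x_0$, merging two such conjuncts at a time; your global-maximum variant is an equivalent packaging of the same idea.

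Two points in your writeup need correction. First, your treatment of the pure conjuncts $p_kx_0=q_kx_0$ is wrong: this is \emph{not} equivalent in $T$ to an $S$-identity $p_ke=q_ke$, because the idempotent $f$ with ${}_SSa_0\mathrel{\widetilde{\to}}{}_SSf$ varies with $a_0$, so $p_kx_0=q_kx_0$ is a genuine constraint on $x_0$ and cannot be dropped. Your parenthetical remedy---fold both $p_k$ and $q_k$ into the list of $x_0$-coefficients before maximizing---is the correct fix and is exactly what the paper's induction does (it compares $Ss_1e$ and $Ss_2e$, the coefficients on the $x_0$-side, without caring what sits on the other side). Second, your ``main obstacle'' is illusory. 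Once $p_ke=u_kp_re$, the derived atom $u_kq_rz_r=q_kz_k$ already lies in $\bar x$ whether or not $z_r=z_k$, and together with $p_rx_0=q_rz_r$ it recovers $p_kx_0=q_kz_k$ via
\[
p_kx_0=p_kex_0=u_kp_rex_0=u_kp_rx_0=u_kq_rz_r=q_kz_k.
\]
No second appeal to primitive normality or to the six-index bookkeeping of Theorem~\ref{prim norm iff} is needed, and the paper's proof makes none.
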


\begin{proof} By Corollary \ref{R is the cup of eR} there exists an idempotent
$e\in R$ such that $R=eR$.
Then
\begin{equation}\tag{10}
T\vdash\forall x(x=ex).
\end{equation}
Let $\Phi(x_0,\bar x)$ be a conjunction of atomic formulas, $\bar
x= \langle x_1,\ldots,x_n\rangle$. We will prove Lemma by the
induction on a number $k$ of atomic subformulas of formula
$\Phi(x_0,\bar x)$ containing a variable $x_0$. Suppose that
 $$\Phi(x_0,\bar x)\rightleftharpoons\Psi_1(x_0,\bar
x)\wedge t_1x_i=s_1x_0,$$
 where $\Psi_1(x_0,\bar x)$ is a conjunction of atomic formulas, $s_1,t_1\in
 S$, $0\leqslant i\leqslant n$. On the
suggestion of the induction
$$\Psi_1(x_0,\bar x)\equiv\Psi_2(\bar
x)\wedge t_2x_j=s_2x_0$$ for some formula $\Psi_2(\bar x)$, which
is a conjunction of atomic formulas, some $s_2,t_2\in S$ and $j$,
$0\leqslant j\leqslant n$. In view of linearly order of a
semigroup $R$ we have either $Ss_1e\subseteq Ss_2e$ or
$Ss_2e\subseteq Ss_1e$. Let for example $Ss_1e\subseteq Ss_2e$.
Then there exists $r\in S$ such that $s_1e=rs_2e$. Hence in view
of (10) we have
$$\Phi(x_0,\bar x)\equiv\Psi_2(\bar
x)\wedge t_2x_j=s_2x_0\wedge t_1x_i=s_1x_0\equiv $$
 $$\equiv\Psi_2(\bar x)\wedge rt_2x_j=t_1x_i\wedge t_2x_j=s_2x_0.$$
Lemma is proved.
\end{proof}

The proof of following Lemma coincides exactly with the proof of
Lemma 3 in \cite{St1}.

\begin{lemma}\label{conj prim form} Let $\Phi(\bar x)$ be not always-false
primitive formula, $\bar x= \langle x_1,\ldots,x_n\rangle$. Then
there exists the formula $\Phi_0(\bar x)$, which is a conjunction
of atomic formulas, and the primitive formulas $\Phi_i(x_i)$,
$1\leqslant i\leqslant n$, such that in theory $T$
$$\Phi(\bar x)\equiv\Phi_0(\bar
x)\wedge \bigwedge_{1\leqslant i\leqslant n}\Phi_i(x_i).$$
\end{lemma}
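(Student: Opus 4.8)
The plan is to prove Lemma \ref{conj prim form} by a two-stage reduction, mirroring Lemma 3 of \cite{St1} but adapted to our hypothesis that $R=eR$ is linearly ordered. Write $\Phi(\bar x)\rightleftharpoons\exists\bar y\,\Theta(\bar x,\bar y)$, where $\Theta$ is a conjunction of atomic formulas in the variables $x_1,\dots,x_n,y_1,\dots,y_m$. The idea is to eliminate the bound variables $y_1,\dots,y_m$ one at a time, each time using Lemma \ref{conj atom form} to gather all occurrences of the variable being eliminated into a single atomic formula, and then projecting it away.

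First I would pick one bound variable, say $y_m$, and regard $\Theta$ as a conjunction of the form $\Xi(x_1,\dots,x_n,y_1,\dots,y_{m-1})\wedge\Theta'(y_m,\dots)$, where $\Theta'$ collects exactly the conjuncts of $\Theta$ in which $y_m$ actually occurs. Applying Lemma \ref{conj atom form} to $\Theta'$ with $x_0:=y_m$, we obtain in $T$ an equivalence $\Theta'\equiv\Psi(\bar z)\wedge t y_m = s v$, where $\bar z$ is the list of the other variables occurring in $\Theta'$, $v$ is one of those variables, $\Psi$ is a conjunction of atomic formulas not involving $y_m$, and $s,t\in S$. The point of the normal form is that the single remaining occurrence of $y_m$ is in the conjunct $t y_m = s v$; since $R=eR$ and every element satisfies $x=ex$, the subformula $\exists y_m\,(t y_m = s v)$ is logically equivalent in $T$ to an atomic (or trivially true) statement about $v$ — concretely, one checks whether $s e \in S t e$, and if so replaces it by the corresponding atomic consequence, otherwise it simply becomes $\exists y_m\, t y_m = s v$ which is again handled by the linear order of $R$. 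Iterating this over $m,m-1,\dots,1$ eliminates all bound variables and produces a conjunction $\Phi_0'(\bar x)$ of atomic formulas equivalent to $\Phi(\bar x)$ in $T$ (here we use that $\Phi$ is not always-false, so each elimination step keeps the formula satisfiable and no contradiction is introduced).

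Second, having reduced to a conjunction $\Phi_0'(\bar x)$ of atomic formulas, I would separate the conjuncts according to which variables they involve. A conjunct either involves a single variable $x_i$ (terms of the form $t x_i = s x_i$), or it relates two distinct variables $x_i,x_j$ (a conjunct $t x_i = s x_j$). Repeatedly applying Lemma \ref{conj atom form} — now with $x_0$ chosen to be whichever variable we wish to isolate — we can arrange that for each $i$ there is at most one ``mixed'' conjunct linking $x_i$ to the rest; the linear order of $R$ (via $se$ versus $te$ in $Se$) lets us compare the coefficients and collapse multiple mixed conjuncts into a single one, throwing the residue into either the one-variable part $\Phi_i(x_i)$ or the part $\Phi_0(\bar x)$. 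After finitely many such manipulations all the genuinely single-variable information has been pushed into primitive formulas $\Phi_i(x_i)$, and what is left, $\Phi_0(\bar x)$, is a conjunction of atomic formulas; this is exactly the claimed decomposition $\Phi(\bar x)\equiv\Phi_0(\bar x)\wedge\bigwedge_{1\le i\le n}\Phi_i(x_i)$.

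The main obstacle I anticipate is bookkeeping rather than any deep idea: one must be careful that the iterated application of Lemma \ref{conj atom form} actually terminates — i.e. that the ``number of atomic subformulas containing the distinguished variable'' strictly decreases and that eliminating one bound variable does not reintroduce occurrences of a variable already eliminated. This is handled by fixing an elimination order on $y_1,\dots,y_m$ once and for all and observing that gathering occurrences of $y_k$ only creates new conjuncts among the variables other than $y_k$, so previously eliminated variables stay eliminated. The second potential subtlety is ensuring the decomposition in the second stage does not loop: here one assigns to each configuration the multiset of pairs $\{i,j\}$ for which a mixed conjunct survives and checks that each application of Lemma \ref{conj atom form} strictly reduces it in a well-founded order. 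Since the present lemma is asserted to have the same proof as Lemma 3 of \cite{St1}, with the only new input being the replacement of the ambient linear order hypothesis by the linear order of $R$ together with identity (10), I would simply carry that argument over verbatim, citing \cite{St1} for the combinatorial details.
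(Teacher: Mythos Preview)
The paper gives no proof of this lemma; it simply asserts that the argument coincides with that of Lemma~3 in \cite{St1}. Your closing sentence does the same, so at the level of citation you and the paper agree.

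Your sketch of the argument, however, has a genuine gap. In the first stage you claim that after Lemma~\ref{conj atom form} isolates $y_m$, the residual formula $\exists y_m\,(t y_m = s v)$ is equivalent in $T$ to an atomic (or trivially true) statement about $v$. You argue only one direction: if $Sse\subseteq Ste$, say $se=rte$, then indeed $t(rv)=rtv=rtev=sev=sv$ and the formula is trivially true. In the remaining case $Ste\subset Sse$ you write that it ``is again handled by the linear order of $R$'', but you give no reduction, and there is no reason to expect one: the statement $\exists y\,(ty=sv)$ asks whether $sv$ lies in $tA$, which in a coproduct of cyclic regular acts can depend on which component $v$ lies in and is not in general decided by an atomic condition on $v$. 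What \emph{is} true, and all that the lemma requires, is that $\exists y_m\,(ty_m=sv)$ is a primitive formula in the single free variable $v$. If $v=x_j$ this formula is absorbed into $\Phi_j(x_j)$; if $v$ is a still-bound $y_k$ it is carried along and swallowed when $y_k$ is eliminated in turn (and if $v=y_m$ itself, the residual is a sentence which, by the not-always-false hypothesis, is true and may be dropped). With this correction the induction over the bound variables already yields the decomposition $\Phi_0(\bar x)\wedge\bigwedge_i\Phi_i(x_i)$ directly, and your second stage becomes unnecessary --- note also that the lemma does not require $\Phi_0$ to have at most one mixed conjunct per variable, so the target of your second stage is stronger than what is being asked.
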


\begin{lemma}\label{eliminac} Let $\bar a\in\mathcal{C}$,
$\Phi(\bar x,\bar y,\bar z,\bar a)$ be a primitive formula which
defines on the infinite generalized primitive set $X$ a binary
operation  $+:$ $\bar x+\bar y=\bar z$. Then the set $X$ is not a
group under this operation.
\end{lemma}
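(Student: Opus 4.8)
The plan is to argue by contradiction: suppose $(X,+)$ is an infinite group, and then use the structural lemmas just proved (Lemmas~\ref{conj atom form} and \ref{conj prim form}) to reduce the defining data of $+$ to a very rigid form, from which infinitude is impossible. First I would invoke Lemma~\ref{conj prim form} on the primitive formula $\Phi(\bar x,\bar y,\bar z,\bar a)$ (regarding $\bar a$ as part of the parameter block) to split it in $T$ into a conjunction of a formula $\Phi_0$ in all the variables together with single-variable primitive ``guard'' formulas $\Phi_i(x_i)$, $\Psi_j(y_j)$, $\Xi_k(z_k)$. The guards only cut down the domain $X$ and do not affect the group law on $X$, so they may be absorbed into the description of $X$; the real content is in $\Phi_0$, a conjunction of atomic formulas relating the entries of $\bar x$, $\bar y$, $\bar z$ (and $\bar a$). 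The goal is then to show that $\Phi_0$ forces $\bar z$ to be determined from $\bar x$ and $\bar y$ in an essentially ``affine'' way over the linearly ordered semigroup $R$ (recall $T\vdash\forall x\,(x=ex)$ with $e\in R$ idempotent, $R=eR$), so that the group $X$ embeds into a product of cyclic-like pieces $Se$; combined with local finiteness coming from the axiomatizability of $\mathfrak R$ this contradicts $|X|=\infty$.

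The key steps, in order, are: (1) Normalize $\Phi$ via Lemma~\ref{conj prim form}, discard the existential quantifiers by naming witnesses, and use Lemma~\ref{conj atom form} repeatedly to eliminate the auxiliary (``$\bar u$'') variables, rewriting each equation $t u_i = s u_0$ into an equation purely among $\bar x,\bar y,\bar z,\bar a$ — this is where the linear order of $R$ is used, exactly as in Lemma~\ref{conj atom form}, to compare $Ss_1e$ and $Ss_2e$. (2) After this elimination, $X$ is carved out of (a subact of) a finite coproduct of copies of ${}_SSe\cong {}_SeR$, and the operation $\bar x+\bar y=\bar z$ is given coordinatewise by equations of the shape $s\,z_{l}=t\,x_{l'}$ or $s\,z_l = t\,y_{l'}$; pin down, using that $+$ is total and single-valued on $X$ and that $0$ is a two-sided identity, that each coordinate of $\bar z$ is forced to equal a fixed $S$-multiple of a coordinate of $\bar x$ or of $\bar y$. (3) Translate the group axioms (existence and uniqueness of inverses, associativity) into this rigid coordinate description and derive that $X$ is (isomorphic to a subgroup of) a finite power of an abelian group living inside $Se$; then observe that a regular $S$-act over a commutative monoid with $R$ linearly ordered has all its ``algebraic'' pieces finite — more precisely, invoke that $\mathfrak R$ is axiomatizable together with the $\Delta$-primitive nature of the basis $X^*$ to conclude $X$ is finite, the desired contradiction.

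The main obstacle I expect is Step~(2)–(3): controlling the operation after elimination. The subtlety is that the existentially quantified block $\bar u$ of $\Phi$ can, a priori, let $\bar z$ depend on $\bar x$ and $\bar y$ in a genuinely relational rather than functional way, and one must use the \emph{hypothesis} that $+$ is an everywhere-defined single-valued operation on the whole infinite set $X$ (not merely a relation) to force functionality down to the level of the normalized atomic formulas. Making precise the claim ``each $z$-coordinate is a fixed $S$-multiple of some fixed input coordinate'' will require a careful argument that different elements of $X$ cannot be routed through incompatible atomic clauses — here I would again lean on the linear order of $R$ (any two relevant $S$-multiples of $e$ are comparable, so only one clause can be ``active'') and on $x=ex$. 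Once functionality in this strong sense is established, the group-theoretic conclusion and the finiteness contradiction are comparatively routine, mirroring the corresponding step in the proof of the analogous statement (Lemma on antiadditivity) in \cite{St1}.
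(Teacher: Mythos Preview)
Your reduction via Lemmas~\ref{conj prim form} and~\ref{conj atom form} is on the right track and matches the paper's opening: one arrives in $T$ at $\Phi\equiv\Psi(\bar x,\bar y,\bar u)\wedge\Theta\wedge\Phi_1(\bar x)\wedge\cdots\wedge\Phi_4(\bar u)$, where $\Theta$ is a conjunction $\bigwedge_i t_iz_i=s_iw_i$ with each $w_i\in\bar x\cup\bar y\cup\bar z\cup\bar u$. But your Step~(3) is a genuine gap. The assertion that $X$ embeds into ``a finite power of an abelian group living inside $Se$'' is unsupported --- $Se$ carries no group structure to speak of --- and the appeal to ``local finiteness coming from the axiomatizability of $\mathfrak R$'' is simply wrong: nothing in the hypotheses bounds the cardinality of cyclic subacts or of $\Delta$-primitive subsets of $\mathcal C$. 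No finiteness argument of this kind is available. Your Step~(2) claim that each $z_l$ is \emph{forced} to equal a fixed $S$-multiple of a single input coordinate is also stronger than what the normal form actually delivers; the clause $t_iz_i=s_iw_i$ does not determine $z_i$.

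The paper bypasses all of this with a direct substitution showing $|X|=1$. Fix any $\bar b,\bar c\in X^*$ and a representative $\bar 0$ of the identity. The instances $\bar 0/\alpha+\bar b/\alpha=\bar b/\alpha$, $\bar 0/\alpha+\bar c/\alpha=\bar c/\alpha+\bar 0/\alpha=\bar c/\alpha$, and $\bar b/\alpha+\bar 0/\alpha=\bar b/\alpha$ let one check, clause by clause, that the tuple $(\bar c,\bar 0,\bar b,\bar a)$ satisfies $\Theta$: e.g.\ if $w_i=x_j$ then $t_i\bar b(i)=s_i\bar 0(j)$ (from $\bar 0+\bar b=\bar b$) while $s_i\bar 0(j)=t_i\bar c(i)=s_i\bar c(j)$ (from the two $\bar c$-identities), so $t_i\bar b(i)=s_i\bar c(j)$; the cases $w_i\in\bar y$, $w_i\in\bar z$, $w_i\in\bar u$ are handled the same way. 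Since also $\mathcal C\models\Psi(\bar c,\bar 0,\bar a)$ and the single-block guard formulas hold on $X^*$, one gets $\mathcal C\models\Phi(\bar c,\bar 0,\bar b,\bar a)$, hence $\bar c/\alpha+\bar 0/\alpha=\bar b/\alpha$, i.e.\ $\bar b/\alpha=\bar c/\alpha$ for arbitrary $\bar b,\bar c$. The missing idea is precisely this recombination trick: once the $\bar z$-constraints are isolated as $t_iz_i=s_iw_i$, the identity-element equations supply enough relations to splice an arbitrary output $\bar b$ onto an arbitrary first argument $\bar c$ --- no functionality, no embedding, and no finiteness argument is needed.
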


\begin{proof} Let $\bar a\in\mathcal{C}$,
$\Phi(\bar x,\bar y,\bar z,\bar a)$ be a primitive formula
defining a structure of group relative to a binary operation $+$,
$X^*$ be a basis, $\alpha$ be a generative equivalence of the
generalized primitive set $X$, $|\bar a|=|\bar u|$. By Lemma
\ref{conj prim form} in the theory $T$
 $$\Phi(\bar x,\bar y,\bar z,\bar u)\equiv
 \Phi_0(\bar x,\bar y,\bar z,\bar u)\wedge \Phi_1(\bar x)\wedge \Phi_2(\bar y)
 \wedge \Phi_3(\bar z)\wedge \Phi_4(\bar u)$$
for some formulas $\Phi_0(\bar x,\bar y,\bar z,\bar u)$,
$\Phi_1(\bar x)$, $\Phi_2(\bar y)$, $\Phi_3(\bar z)$, $\Phi_4(\bar
u)$, where $\Phi_0(\bar x,\bar y,\bar z,\bar u)$ is a conjunction
of atomic formulas, $\Phi_1(\bar x)$, $\Phi_2(\bar y)$,
$\Phi_3(\bar z)$, $\Phi_4(\bar u)$ are the primitive formulas. By
Lemma \ref{conj atom form} there exist the formula $\Psi(\bar
x,\bar y,\bar u)$, $t_i,s_i\in S$ and $\bar w= \langle
w_1,\ldots,w_n\rangle$, where $w_i\in \bar x\cup\bar y\cup\bar
z\cup\bar u$, $1\leqslant i\leqslant n$, such that $\Psi(\bar
x,\bar y,\bar u)$ is a conjunction of atomic formulas and in the
theory $T$
$$
\Phi_0(\bar x,\bar y,\bar z,\bar u)\equiv\Psi(\bar x,\bar y,\bar
u)\wedge\Theta(\bar x,\bar y,\bar z,\bar u),
$$
where
$$\Theta(\bar x,\bar y,\bar z,\bar u)\rightleftharpoons\bigwedge_{1\leqslant i\leqslant n}
 t_iz_i=s_iw_i.$$
Let $\bar b,\bar c\in X^*$, $\bar 0/\alpha$ be a null element of
the group $X$. Suppose that $t_iz_i=s_ix_j$ is an atomic
subformula of the formula $\Theta(\bar x,\bar y,\bar z,\bar u)$.
Since $\bar 0/\alpha+\bar b/\alpha=\bar b/\alpha$ then $t_i\bar
b(i)=s_i\bar 0(j)$. Since
\begin{equation}\tag{11}
\bar 0/\alpha+\bar c/\alpha=\bar c/\alpha+\bar 0/\alpha=\bar
c/\alpha.
\end{equation}
then $t_i\bar c(i)=s_i\bar 0(j)=s_i\bar c(j)$. So $t_i\bar
b(i)=s_i\bar c(j)$. Suppose $t_iz_i=s_iy_j$ is an atomic
subformula of the formula $\Theta(\bar x,\bar y,\bar z,\bar u)$.
Since
\begin{equation}\tag{12}
\bar b/\alpha+\bar 0/\alpha=\bar b/\alpha,
\end{equation}
then $t_i\bar b(i)=s_i\bar 0(j)$. If $t_iz_i=s_iz_j$ is an atomic
subformula of the formula $\Theta(\bar x,\bar y,\bar z,\bar u)$,
then (11) implies the equality $t_i\bar b(i)=s_i\bar a(j)$.
Moreover from (11) and (12) we have
$$\mathcal{C}\models\Psi(\bar c,\bar 0,\bar a)\wedge\Phi_1(\bar c)\wedge \Phi_2(\bar 0)
 \wedge \Phi_3(\bar b)\wedge \Phi_4(\bar a).$$
Hence $\bar c/\alpha+\bar 0/\alpha=\bar b/\alpha$, that is $\bar
c/\alpha=\bar b/\alpha$ and $|X|=1$. Contradiction.
\end{proof}

Lemma \ref{eliminac} implies

\begin{theorem}\label{antiadd} If $S$ is a commutative monoid, the class $\mathfrak {R}$ of regular $S$-acts is
axiomatizable and primitive normal then the class $\mathfrak {R}$
is antiadditive.
\end{theorem}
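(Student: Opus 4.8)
The plan is to unwind the definition of antiadditivity and reduce the statement to the two conditions it comprises. By definition, the theory $T$ of the class $\mathfrak{R}$ is antiadditive precisely when (i) $T$ is primitive normal and (ii) there is no infinite generalized primitive set which is an Abelian group under an operation defined by a primitive formula. Condition (i) is part of the hypothesis of the theorem, so the entire argument reduces to verifying (ii).

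First I would record that the standing assumptions under which Lemmas \ref{conj atom form}--\ref{eliminac} were proved are in force here: since $S$ is commutative and $\mathfrak{R}$ is axiomatizable and primitive normal, Theorem \ref{R prim norm iff} yields that $R$ is a linearly ordered semigroup, so $T$ is exactly the theory to which Lemma \ref{eliminac} applies.

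Next I would argue (ii) by contradiction. Suppose some infinite generalized primitive set $X$, with basis $X^*$ and generative equivalence $\alpha$, carried an Abelian group structure whose operation $\bar x + \bar y = \bar z$ is defined on $X$ by a primitive formula $\Phi(\bar x, \bar y, \bar z, \bar a)$ over a parameter tuple $\bar a \in \mathcal{C}$. Then $\Phi$ is a primitive formula defining on the infinite generalized primitive set $X$ a binary operation making $X$ a group, which is precisely the configuration excluded by Lemma \ref{eliminac}; that lemma forces $|X| = 1$, contradicting the infinitude of $X$. Hence no such $X$ exists, condition (ii) holds, and therefore $T$ — equivalently the class $\mathfrak{R}$ — is antiadditive.

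The only substantive content has already been discharged in Lemma \ref{eliminac}, so no real obstacle remains in the present argument beyond invoking it correctly. The single point requiring a little care is that "Abelian group" in the definition of antiadditivity is a special case of "group", so the (formally weaker) conclusion of Lemma \ref{eliminac}, which rules out every group structure on an infinite generalized primitive set, is more than enough to conclude.
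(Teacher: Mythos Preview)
Your proposal is correct and follows exactly the paper's approach: the paper's proof of Theorem~\ref{antiadd} is simply the one line ``Lemma~\ref{eliminac} implies,'' and you have spelled out precisely why that lemma applies (the standing hypotheses on $T$ hold via Theorem~\ref{R prim norm iff}) and why ruling out all group structures in particular rules out Abelian ones.
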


By Theorems \ref{R prim norm iff}, \ref{antiadd} and definition of
antiadditive class we have

\begin{corollary} Let $S$ be a commutative monoid and the class $\mathfrak {R}$ of regular $S$-acts is
axiomatizable.
Then the following conditions are equivalent:

1) the class $\mathfrak {R}$ is primitive normal;

2) the class $\mathfrak {R}$ is antiadditive;

3) the semigroup $R$ is linearly order.
\end{corollary}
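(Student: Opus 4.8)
The plan is to establish all three equivalences by a short cycle of implications that invokes the two theorems proved above together with the definition of antiadditivity; no genuinely new argument is required, so the work lies entirely in correctly assembling the cited results under the standing hypotheses that $S$ is commutative and $\mathfrak{R}$ is axiomatizable.

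First I would record the implication $(1)\Rightarrow(2)$, which is precisely the content of Theorem \ref{antiadd}: under these hypotheses, primitive normality of $\mathfrak{R}$ already forces antiadditivity. Thus this step is immediate once the theorem is cited. Next I would prove $(2)\Rightarrow(1)$ directly from the definition of an antiadditive theory (and hence of an antiadditive class): a theory is called antiadditive only if it is already primitive normal, the nonexistence of an infinite generalized primitive set carrying an Abelian group structure being an \emph{additional} requirement layered on top of primitive normality. Consequently antiadditivity of $\mathfrak{R}$ contains primitive normality of $\mathfrak{R}$ as one of its defining clauses, so $(2)\Rightarrow(1)$ holds trivially. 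Together these two steps give $(1)\Leftrightarrow(2)$.

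Finally I would close the loop by invoking Theorem \ref{R prim norm iff}, which asserts that for a commutative monoid $S$ with axiomatizable $\mathfrak{R}$ the class is primitive normal if and only if the semigroup $R$ is linearly ordered; this is exactly $(1)\Leftrightarrow(3)$. Chaining $(2)\Leftrightarrow(1)\Leftrightarrow(3)$ then yields the equivalence of all three conditions. The only point demanding any attention — and it is definitional rather than a real obstacle — is to remember that antiadditivity is defined so as to include primitive normality, so that the implication $(2)\Rightarrow(1)$ requires nothing beyond unwinding the definition; everything else is supplied verbatim by Theorems \ref{antiadd} and \ref{R prim norm iff}.
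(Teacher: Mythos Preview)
Your proposal is correct and follows exactly the paper's own approach: the corollary is stated as an immediate consequence of Theorems \ref{R prim norm iff} and \ref{antiadd} together with the definition of an antiadditive class, which is precisely the cycle $(1)\Leftrightarrow(3)$, $(1)\Rightarrow(2)$, and $(2)\Rightarrow(1)$ that you describe.
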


\end{document}